\providecommand{\U}[1]{\protect\rule{.1in}{.1in}}
\providecommand{\U}[1]{\protect\rule{.1in}{.1in}}
\providecommand{\U}[1]{\protect\rule{.1in}{.1in}}
\providecommand{\U}[1]{\protect\rule{.1in}{.1in}}
\providecommand{\U}[1]{\protect\rule{.1in}{.1in}}
\providecommand{\U}[1]{\protect\rule{.1in}{.1in}}
\providecommand{\U}[1]{\protect\rule{.1in}{.1in}}
\providecommand{\U}[1]{\protect\rule{.1in}{.1in}}
\providecommand{\U}[1]{\protect\rule{.1in}{.1in}}
\providecommand{\U}[1]{\protect\rule{.1in}{.1in}}
\providecommand{\U}[1]{\protect\rule{.1in}{.1in}}
\providecommand{\U}[1]{\protect\rule{.1in}{.1in}}
\providecommand{\U}[1]{\protect\rule{.1in}{.1in}}
\providecommand{\U}[1]{\protect\rule{.1in}{.1in}}
\providecommand{\U}[1]{\protect\rule{.1in}{.1in}}
\providecommand{\U}[1]{\protect\rule{.1in}{.1in}}
\providecommand{\U}[1]{\protect\rule{.1in}{.1in}}
\providecommand{\U}[1]{\protect\rule{.1in}{.1in}}
\providecommand{\U}[1]{\protect\rule{.1in}{.1in}}
\providecommand{\U}[1]{\protect\rule{.1in}{.1in}}
\providecommand{\U}[1]{\protect\rule{.1in}{.1in}}
\providecommand{\U}[1]{\protect\rule{.1in}{.1in}}
\providecommand{\U}[1]{\protect\rule{.1in}{.1in}}
\providecommand{\U}[1]{\protect\rule{.1in}{.1in}}
\providecommand{\U}[1]{\protect\rule{.1in}{.1in}}
\providecommand{\U}[1]{\protect\rule{.1in}{.1in}}
\providecommand{\U}[1]{\protect\rule{.1in}{.1in}}
\providecommand{\U}[1]{\protect\rule{.1in}{.1in}}
\newtheorem{theorem}{Theorem}
{}
\newtheorem{definition}{Definition}
\newtheorem{lemma}{Lemma}
{}
\newtheorem{remark}{Remark}
\newenvironment{proof}[1][Proof]{\textbf{#1.} }{\ \rule{0.5em}{0.5em}}
\begin{document}

\title{Asymptotic \ Analysis of Non-self-adjoint Hill Operators }
\author{O. A. Veliev\\{\small Depart. of Math., Dogus University, Ac\i badem, Kadik\"{o}y, \ }\\{\small Istanbul, Turkey.}\ {\small e-mail: oveliev@dogus.edu.tr}}
\date{}
\maketitle

\begin{abstract}
We obtain the uniform asymptotic formulas for the eigenvalues and
eigenfunctions of the Sturm-Liouville operators $L_{t}(q)$ with a potential
$q\in L_{1}[0,1]$ and with $t-$periodic boundary conditions, $t\in(-\pi,\pi]$.
Using these formulas, we find sufficient conditions on the potential $q$ such
that the number of spectral singularities in the spectrum of the Hill
operator$\ L(q)$ in $L_{2}(-\infty,\infty)$ is finite. Then we prove that the
operator $L(q)$ has no spectral singularities at infinity and it is an
asymptotically spectral operator provided that the potential$\ q$ satisfies
the sufficient conditions.

Key Words: Asymptotic formulas, Hill operator, Spectral singularities,
Spectral operator.

AMS Mathematics Subject Classification: 34L05, 34L20.

\end{abstract}

\section{\bigskip Introduction and Preliminary Facts}

Let $L(q)$ be the Hill operator generated in $L_{2}(-\infty,\infty)$ by the expression%

\begin{equation}
-y^{^{\prime\prime}}+q(x)y,
\end{equation}
where $q(x)$ is a complex-valued summable function on $[0,1]$ and
$q(x+1)=q(x)$ for a.e. $x\in(-\infty,\infty)$. It is well-known that (see [7],
[23] for real and [5], [16]-[18] for complex-valued $q$) the spectrum
$S(L(q))$ of the operator $L(q)$ is the union of the spectra $S(L_{t}(q))$ of
the Sturm-Liouville operators $L_{t}(q)$ for $t\in(-\pi,\pi],$ where
$L_{t}(q)$ is the operator generated in $L_{2}[0,1]$ by (1) and by the
boundary conditions
\begin{equation}
y(1)=e^{it}y(0),\text{ }y^{^{\prime}}(1)=e^{it}y^{^{\prime}}(0).
\end{equation}

In this paper we obtain the asymptotic formulas, uniform with respect to
$t\in(-\pi,\pi],$ for the\ eigenvalues and eigenfunctions of $L_{t}(q)$. (We
recall that the formula\ $f(k,t)=O(h(k))$ is said to be uniform with respect
to $t$ in a set $A$ if there exist positive constants $M$ and $N$ such that
$\mid f(k,t))\mid<M\mid h(k)\mid$ for all $t\in A$ and $\mid k\mid\geq N.)$
Using these asymptotic formulas, we find sufficient conditions on the
potential $q$ such that the number of the spectral singularities in $S(L(q))$
is finite and in a certain sense $L(q)$ is an asymptotically spectral operator.

The spectral expansion for the self-adjoint operator $L(q)$ was constructed by
Gelfand [7] and Titchmarsh [23]. Tkachenko [24] proved that the
non-self-adjoint operator $L(q)$ can be reduced to the triangular form if all
eigenvalues of the operators $L_{t}(q)$ for all $t\in(-\pi,\pi]$ are simple.
McGarvey [17] proved that $L(q)$ is a spectral operator if and only if the
projections of the operators $L_{t}(q)$ are bounded uniformly with respect to
$t$ in $(-\pi,\pi]$. However, in general, the eigenvalues of $L_{t}(q)$ are
not simple and their projections are not uniformly bounded. For instance,
Gasymov [6] investigated the operator $L(q)$ with the potential$\ q$ which can
be continued analytically onto upper half plane and proved that this operator
(in particular $L(q)$ with the simple potential $q(x)=e^{i2\pi x}),$ has
infinitely many spectral singularities. Note that the spectral singularities
of the operator $L(q)$ are the points of $S(L(q))$ in neighborhoods of which
the projections of $L(q)$ are not uniformly bounded. In [26] we proved that a
number $\lambda=\lambda_{n}(t)\in S(L)$ is a spectral singularity of $L(q)$ if
and only if the operator $L_{t}(q)$ has an associated function at the point
$\lambda_{n}(t).$ In [25] (see also [27]) we constructed the spectral
expansion for the operator $L(q)$ with a continuous and complex-valued
potential. In [28], we obtained the asymptotic formulas for the eigenvalue and
eigenfunction of $L_{t}(q)$ with $q\in L_{1}[0,1]$ and $t\neq0,\pi.$ Then
using these formulas, we proved that the eigenfunctions and associated
functions of $L_{t\text{ }}$form a Riesz basis in $L_{2}[0,1]$ for $t\neq
0,\pi$ and constructed the spectral expansion for the operator $L(q).$ (See
also [13], [29], [30] for the spectral expansion of the differential operators
with periodic coefficients). Recently, Gesztezy and Tkachenko [8], [9] proved
two versions of a criterion for the Hill operator $L(q)$ with $q\in
L_{2}[0,1]$ to be a spectral operator of scalar type, one analytic and one
geometric. The analytic version was stated in term of the solutions of the
Hill's equation. The geometric version of the criterion used the algebraic and
geometric \ properties of the spectra of the periodic/antiperiodic and
Dirichlet boundary value problems.

Since the spectral property of $L(q)$ is strongly connected with the operators
$L_{t}(q)$ for $t\in(-\pi,\pi],$ let us discuss briefly the works devoted to
$L_{t}(q).$ It is known that the operator $L_{t}(q)$ is Birkhoff regular [14].
In the case $t\neq0,\pi$ it is strongly regular and the root functions of the
operator $L_{t}(q)$ form a Riesz basis (this result was proved independently
in [4], [12] and [19]). In the cases $t=0$ and $t=\pi,$ the operator
$L_{t}(q)$ is not strongly regular. In the case when an operator is regular
but not strongly regular the root functions generally do not form even usual
basis. However, it is known [20], [21] that they can be combined in pairs, so
that the corresponding 2-dimensional subspaces form a Riesz basis of subspaces.

Let us also briefly describe some historical developments related to the Riesz
basis property of the root functions of the periodic and antiperiodic boundary
value problems. We will focus only on the periodic problem. The antiperiodic
problem is similar to the periodic one. In 1996 at a seminar in MSU Shkalikov
formulated the following result. Assume that $q(x)$ is a smooth potential,
\begin{equation}
q^{(k)}(0)=q^{(k)}(1),\quad\forall\,k=0,1,...,s-1
\end{equation}
and $q^{(s)}(0)\neq q^{(s)}(1)$. Then the root functions of the operator
$L_{0}(q)$ form a Riesz basis in $L_{2}[0,1]$. Kerimov and Mamedov [11]
obtained the rigorous proof of this result in the case $q\in C^{4}%
[0,1],\ q(1)\neq q(0)$. Indeed, this result remains valid for an arbitrary
$s\geq0$ and it was obtained in Corollary~2 of [22].

Another approach is due to Dernek and Veliev [1]. The result was obtained in
terms of the Fourier coefficients of the potential $q.$ Namely, we proved that
if the conditions
\begin{align}
\lim_{n\rightarrow\infty}\frac{\ln\left\vert n\right\vert }{nq_{2n}}  &
=0,\text{ }\\
q_{2n}  &  \sim q_{-2n}%
\end{align}
hold, then the root functions of $L_{0}(q)$ form a Riesz basis in $L_{2}%
[0,1]$, where $q_{n}=:(q,e^{i2\pi nx})$ is the Fourier coefficient of $q$ and
everywhere, without loss of generality, it is assumed that $q_{0}=0.$ Here
$(.,.)$ denotes inner product in $L_{2}[0,1]$ and $a_{n}\sim b_{n}$ means that
$a_{n}=O(b_{n})$ and $b_{n}=O(a_{n})$ as $\ n\rightarrow\infty.$ Makin [15]
improved this result. Using another method he proved that the assertion on the
Riesz basis property remains valid if condition (5) holds, but condition (4)
is replaced by a less restrictive one: $q\in W_{1}^{s}[0,1],$ (3) holds and
$\mid q_{2n}\mid>c_{0}n^{-s-1}$ with some$\ \,c_{0}>0$ for sufficiently large
$n,$ where $s$ is a nonnegative integer. Besides, some conditions which imply
the absence of the Riesz basis property were presented in [15]. \ The results
which we obtained in [22] are more general and cover all the results discussed
above. Several theorems on the Riesz basis property of the root functions of
the operator $L_{0}(q)$ were proved. One of the main results of [22] is the following:

\textit{ }Let $p\geq0$ be an arbitrary integer, $q\in W_{1}^{p}[0,1]$ and (3)
holds with some $s\leq p.$ Suppose that there is a number $\varepsilon>0$ such
that either the estimate
\begin{equation}
|q_{2n}-S_{2n}+2Q_{0}Q_{2n}|\geq\varepsilon n^{-s-2}%
\end{equation}
or the estimate
\begin{equation}
|q_{-2n}-S_{-2n}+2Q_{0}Q_{-2n}|\geq\varepsilon n^{-s-2}%
\end{equation}
hold, where $Q_{k}=(Q(x),\,e^{2\pi ikx})$ and$\ \,S_{k}=(S(x),\,e^{2\pi ikx})$
are the Fourier coefficients of
\[
Q(x)=\int_{0}^{x}q(t)\,dt\text{ and}\quad S(x)=Q^{2}(x).
\]
Then the condition
\begin{equation}
q_{2n}-S_{2n}+2Q_{0}Q_{2n}\sim q_{-2n}-S_{-2n}+2Q_{0}Q_{-2n}%
\end{equation}
is necessary and sufficient for the root functions of $L_{0}(q)$ to form a
Riesz basis. Moreover, if (6) (or (7)) and (8) hold then all the large
eigenvalues of $L_{0}(q)$ are simple.

Some sharp results on the absence of the Riesz basis property were obtained by
Djakov and Mitjagin [2]. Moreover, recently, \ Djakov and Mitjagin\ [3]
obtained some interesting results about the Riesz basis property of the root
functions of the operators $L_{0}(q)$ with trigonometric polynomial
potentials. Here we do not formulate precisely the results of [2] and [3],
since it will take some additional pages which are not related to our results.
Very recently Gesztezy and Tkachenko [10] proved a criterion \textit{ }for the
root functions of $L_{0}(q)$ to form a Riesz basis in terms of the spectra of
the periodic and Dirichlet boundary value problems.

Next, we present some preliminary facts, from [28] and [1], which are needed
in the following.

\textbf{Result 1 (see [28]).}\textit{ The eigenvalues }$\lambda_{n}%
(t)$\textit{ \ and the eigenfunctions }$\Psi_{n,t}(x)$\textit{ of the operator
}$L_{t}(q)$\textit{ for }$t\neq0,\pi,$\textit{ satisfy the following
asymptotic formulas }%
\begin{equation}
\lambda_{n}(t)=(2\pi n+t)^{2}+O(\frac{ln\left\vert n\right\vert }{n}),\text{
}\Psi_{n,t}(x)=e^{i(2\pi n+t)x}+O(\frac{1}{n}).
\end{equation}
\textit{These asymptotic formulas are uniform with respect to }$t$\textit{ in
}$[\rho,\pi-\rho],$ \textit{where }$\rho\in(0,\frac{\pi}{2})$ \textit{(see
Theorem 2 of [9]). In other words, there exist positive numbers }$N(\rho
)$\textit{ and }$M(\rho),$\textit{ independent of }$t,$\textit{ such that the
eigenvalues }$\lambda_{n}(t)$\textit{ for \ }$t\in\lbrack\rho,\pi-\rho
]$\textit{ and}$\mid n\mid>N(\rho)$\textit{ are simple and the terms }%
$O(\frac{1}{n})$\textit{, }$O(\frac{ln\left\vert n\right\vert }{n})$\textit{
in (9) do not depend on }$t.$

\textbf{Result 2 (see [1])}\textit{. Let conditions (4) and (5) hold. Then:}

$(a)$\textit{ All sufficiently large eigenvalues of the operator }$L_{0}%
(q)$\textit{ are simple. They consist of two sequences \ }$\{\lambda
_{n,1}:n>N_{0}\}$\textit{ and }$\{\lambda_{n,2}:n>N_{0}\}$\textit{\ satisfying
}%
\begin{equation}
\lambda_{n,j}=(2\pi n)^{2}+(-1)^{j}p_{2n}+O\left(  \frac{ln\left\vert
n\right\vert }{n}\right)
\end{equation}
\textit{for }$j=1,2,$\textit{ where }$p_{n}=(q_{n}q_{-n})^{\frac{1}{2}}%
.$\textit{ The corresponding eigenfunctions }$\varphi_{n,j}(x)$\textit{
satisfy }%
\begin{equation}
\varphi_{n,j}(x)=e^{i2\pi nx}+\alpha_{n,j}e^{-i2\pi nx}+O(\frac{1}{n}),
\end{equation}
\textit{where }$\alpha_{n,j}\sim1,$ $\alpha_{n,j}=\frac{(-1)^{j}p_{2n}}%
{q_{2n}}+O\left(  \frac{ln\left\vert n\right\vert }{nq_{2n}}\right)  ,$
$j=1,2.$

$(b)$\textit{\ The root functions of }$L_{0}(q)$\textit{ form a Riesz basis in
}$L_{2}[0,1].$

In [31] and [32] we generalized the results of [1] for the operators generated
by the differential equation of order $n>2$ and by the system of differential equations.

To summarize, in [1] and [22] we obtained the asymptotic formulas for the
operators $L_{t}(q)$ with $t=0,\pi$. In [28] we obtained the asymptotic
formulas for the operators $L_{t}(q)$ which are uniform with respect to
$t\in\lbrack\rho,\pi-\rho].$ In this paper, we obtain the uniform asymptotic
formulas in much more complicated case of $t\in\lbrack0,\rho]\cup\lbrack
\pi-\rho,\pi]$ (see Theorem 3 and 4). We note that in our description, some
formulas of Section 2 are similar to those given in [1], [22] and [28], but
here we wish to obtain the uniform, with respect to $t\in\lbrack0,\rho
]\cup\lbrack\pi-\rho,\pi],$ formulas which are absent in these papers. We will
focus only on the case $t\in\lbrack0,\rho].$ (The case $t\in\lbrack\pi
-\rho,\pi]$ can be considered in the same way).\ Since the eigenvalues of
$L_{-t}(q)$ coincide with those of $L_{t}(q)$, we obtain the uniform, with
respect to $t$ in $(-\pi,\pi],$ asymptotic formulas for the operators
$L_{t}(q)$. These formulas imply that if the potential $q$ satisfies certain
conditions, then there exists a positive constant $C$ independent of $t$ such
that all the eigenvalues of $L_{t}(q)$ lying outside the disk $\{\lambda
\in\mathbb{C}:\left\vert \lambda\right\vert \leq C\}$ are simple for all the
values of $t$ in $(-\pi,\pi]$. Since the spectral singularities of the
operator $L(q)$ are contained in the set of multiple eigenvalues of
$L_{t}(q),$ we obtain the sufficient conditions on $q$ such that the Hill
operator $L(q)$ has at most finitely many spectral singularities. Moreover, we
prove that if $q$ satisfies these conditions then $L(q)$ has no spectral
singularity at infinity and in the sense of Definition 3 given in Section 3,
the operator $L(q)$ is an asymptotically spectral operator.

\section{Uniform Asymptotic Formulas for $L_{t}(q)$}

It is well-known that the eigenvalues of $L_{t}(q)$ are the squares of the
roots of the equation
\begin{equation}
F(\xi)=2\cos t,
\end{equation}
where $F(\xi)=\varphi^{^{\prime}}(1,\xi)+\theta(1,\xi),$ and $\varphi(x,\xi)$
and $\theta(x,\xi)$ are the solutions of the equation%

\[
-y^{^{\prime\prime}}+q(x)y=\xi^{2}y
\]
satisfying the initial conditions $\theta(0,\xi)=\varphi^{^{\prime}}%
(0,\xi)=1,\quad\theta^{^{\prime}}(0,\xi)=\varphi(0,\xi)=0.$ In [14] (see
chapter 1, sec. 3) it was proved that
\begin{equation}
F(\xi)-2\cos\xi=e^{\left\vert Im\xi\right\vert }\varepsilon(\xi),\text{
}\underset{\left\vert \xi\right\vert \rightarrow\infty}{\lim}\varepsilon
(\xi)=0.
\end{equation}

Let us consider the functions $F(\xi)-2\cos\xi$ and $2\cos\xi-\cos t$ on the
circle
\begin{equation}
C(n,t,\rho)=:\{\xi\in\mathbb{C}:\left\vert \xi-(2\pi n+t)\right\vert =3\rho\},
\end{equation}
where $t\in\lbrack0,\rho]$ and $\rho$ is a sufficiently small fixed number. By
(13) there exists a positive number $N(0,\rho)$ such that
\begin{equation}
\left\vert F(\xi)-2\cos\xi\right\vert <\rho^{2}%
\end{equation}
for $\xi\in C(n,t,\rho)$ whenever $n>N(0,\rho)$ and $t\in\lbrack0,\rho].$ On
the other hand, using the Taylor formula of $\cos\xi$ at the point $2\pi n+t$
for $\xi=2\pi n+t+3\rho e^{i\alpha},$ where $\alpha\in(-\pi,\pi],$ and taking
into account the inequalities $\left\vert \sin t\right\vert \leq\rho$ and
$\left\vert \cos t\right\vert >\frac{9}{10}$ for $t\in\lbrack0,\rho],$ we
obtain
\begin{equation}
\mid2\cos\xi-2\cos t\mid=2\mid-3\rho e^{i\alpha}\sin t+\frac{9}{2}\rho
^{2}e^{2i\alpha}\cos t+O(\rho^{4})\mid>2\rho^{2}.
\end{equation}
By the Rouche's theorem, it follows from (15) and (16) that equation (12) and
\begin{equation}
\cos\xi-\cos t=0
\end{equation}
have the same number of the roots inside $C(n,t,\rho),$ where $n>N(0,\rho).$
Since equation (17) has $2$ roots inside the circle $C(n,t,\rho),$ equation
(12) has also $\ 2$ roots (counting multiplicity) inside this circle for $n>$
$N(0,\rho)$. On the other hand, it is proved in [14] (see chapter 1, sec. 3)
that the estimation
\[
F(\xi)-2\cos\xi=o(\cos\xi-\cos t)
\]
holds on the boundaries of the admissible strip $K_{n}=:\left\{
\xi:\left\vert \operatorname{Re}\xi\right\vert <(2n+1)\pi\right\}  $ for
$t\in\lbrack0,\rho].$ Hence the number of the roots of equations (12) and (17)
are the same in the strip $K_{n}.$ Similarly, these equations have the same
number of the roots in the set $K_{n+1}\backslash K_{n}$ for large $n$. The
following remark follows from these arguments.

\begin{remark}
There exists a large number $N(0,\rho)$ such that the number of the roots of
equations (12) lying in the strip $K_{N}$ is $2N+1.$ Denote these roots by
$\xi_{n}(t)$ for

$n=0,\pm1,\pm2,...,\pm N.$ The roots of equation (12) lying outside $K_{N}$
consist of the roots lying inside the contours $C(n,t,\rho),$ defined in (14),
for $n>$ $N(0,\rho).$ Moreover, (12) has two roots, denoted by $\xi_{n,1}(t)$
and $\xi_{n,2}(t),$ lying inside $C(n,t,\rho)$. Thus
\begin{equation}
\left\vert \xi_{n,j}(t)-(2\pi n+t)\right\vert <3\rho,\text{ }\forall\left\vert
n\right\vert >N(0,\rho),\text{ }t\in\lbrack0,\rho],\text{ }j=1,2.
\end{equation}
Since the entire function $\frac{dF}{d\xi}$ has a finite number of zeros
inside the circle

$\{\xi\in\mathbb{C}:\left\vert \xi-2\pi n\right\vert =4\rho\}$ and this circle
encloses $C(n,t,\rho)$ for all $t\in\lbrack0,\rho]$, there exist at most
finite $t_{1},t_{2},...,t_{k}$ from $(0,\rho)$ for which $\xi_{n}(t_{k})$ is a
double root of (12). Let $0<t_{1}<t_{2}<...<t_{k}<\rho.$ By the implicit
function theorem the functions $\xi_{n,1}(t)$ and $\xi_{n,2}(t)$ can be chosen
as analytic in intervals $(0,t_{1}),$ $(t_{k},\rho)$ and $(t_{s},t_{s+1})$ for
$s=1,2,...,k-1.$ Let $\xi$ be any limit point of $\xi_{n,j}(t)$ as
$t\rightarrow t_{s}.$ Since $F(\xi_{n,j}(t))=2\cos t$ for $j=1,2$ and $F$ is
continuous, we have $F(\xi)=2\cos t_{s}.$ However, this equation has only one
double root $\xi_{n,1}(t_{s})=\xi_{n,2}(t_{s})$ inside $C(n,t_{s},\rho).$
Thus
\[
\lim_{t\rightarrow t_{s}^{-}}\xi_{n,1}(t)=\lim_{t\rightarrow t_{s}^{+}}%
\xi_{n,1}(t)=\lim_{t\rightarrow t_{s}^{-}}\xi_{n,2}(t)=\lim_{t\rightarrow
t_{s}^{+}}\xi_{n,1}(t)=\xi_{n,1}(t_{s})=\xi_{n,2}(t_{s})
\]
for $s=1,2,...,k.$ This implies that the eigenvalues $\lambda_{n,1}(t)=$
$\xi_{n,1}^{2}(t)$ and $\lambda_{n,2}(t)=$ $\xi_{n,2}^{2}(t)$ of $L_{t}(q)$
can be chosen as continuous function on $(0,\rho).$ By the result of [28] (see
introduction)\ $\lambda_{n,1}(\rho)$ and $\lambda_{n,2}(\rho)$ are the simple
eigenvalues of $L_{\rho}(q)$ for $n>N(\rho).$ Moreover, if \textit{ }$q\in
L_{1}[0,1]$ \textit{and }(4), (5) hold then by the result of [1]
$\lambda_{n,1}(0)$ and $\lambda_{n,2}(0)$ are the simple eigenvalues of
$L_{0}$ for $n>N_{0}.$ These arguments imply the continuity of the functions
$\lambda_{n,1}(t),$ $\lambda_{n,2}(t)$ and
\begin{equation}
d_{n}(t)=:\left\vert \lambda_{n,1}(t)-\lambda_{n,2}(t)\right\vert
\end{equation}
on $[0,\rho]$ for $n>N=:\max\{N(0,\rho),N(\rho),N_{0}\}$. By (18) we have
\begin{equation}
\left\vert \lambda_{n,j}(t)-(2\pi n+t)^{2}\right\vert <15\pi n\rho
\end{equation}
for $t\in\lbrack0,\rho],$ $n>N$ and $j=1,2$. Thus for $t\in\lbrack0,\rho]$ and
$n>N$ the disk
\begin{equation}
D(n,t,\rho)=:\{\lambda\in\mathbb{C}:\left\vert \lambda-(2\pi n+t)^{2}%
\right\vert <15\pi n\rho\}
\end{equation}
contains two eigenvalues (counting multiplicity) $\lambda_{n,1}(t)$ and
$\lambda_{n,2}(t)$ that are continuous function on the interval $[0,\rho].$ In
addition to these eigenvalues, the operator $L_{t}(q)$ for $t\in\lbrack
0,\rho]$ has only $2N+1$ eigenvalues.
\end{remark}

Using (20), one can readily see that
\begin{equation}
\left\vert \lambda_{n,j}(t)-(2\pi(n-k)+t)^{2}\right\vert >\left\vert
k\right\vert \left\vert 2n-k\right\vert
\end{equation}
for $k\neq0,2n$ and $t\in\lbrack0,\rho]$, where $n>N$ and $j=1,2.$ To obtain
the uniform asymptotic formulas for the eigenvalues $\lambda_{n,j}(t)$ and
normalized eigenfunctions $\Psi_{n,j,t}(x)$, we use (22) and the iteration of
the formula
\begin{equation}
(\lambda_{n,j}(t)-(2\pi(n-k)+t)^{2})(\Psi_{n,j,t},e^{i(2\pi(n-k)+t)x}%
)=(q\Psi_{n,j,t},e^{i(2\pi(n-k)+t)x})
\end{equation}
which can be obtained obtained from $-\Psi_{n,j,t}^{^{\prime\prime}}%
+q\Psi_{n,j,t}=\lambda_{n,j}(t)\Psi_{n,j,t}$ by multiplying $e^{i(2\pi
(n-k)+t)x}$. To iterate (23) we use the following lemma.

\begin{lemma}
For the right-hand side of (23) the following equality
\begin{equation}
(q\Psi_{n,j,t},e^{i(2\pi(n-k)+t)x})=\sum_{m=-\infty}^{\infty}q_{m}%
(\Psi_{n,j,t},e^{i(2\pi(n-k-m)+t)x})
\end{equation}
and inequality%
\begin{equation}
\left\vert (q\Psi_{n,j,t},e^{i(2\pi(n-k)+t)x})\right\vert <3M
\end{equation}
hold for all $n>N,$ $k\in\mathbb{Z},$ $j=1,2$ and $t\in\lbrack0,\rho],$ where
$M=\sup_{n\in\mathbb{Z}}\left\vert q_{n}\right\vert ,$ and $N$ \ is defined in
Remark 1. The eigenfunction $\Psi_{n,j,t}(x)$ satisfies the following, uniform
with respect to $t\in\lbrack0,\rho],$ asymptotic formula
\begin{equation}
\Psi_{n,j,t}(x)=u_{n,j}(t)e^{i(2\pi n+t)x}+v_{n,j}(t)e^{i(-2\pi n+t)x}%
+h_{n,j,t}(x),
\end{equation}
where $u_{n,j}(t)=(\Psi_{n,j,t}(x),e^{i(2\pi n+t)x}),$ $v_{n,j}(t)=(\Psi
_{n,j,t}(x),e^{i(-2\pi n+t)x}),$
\begin{equation}
(h_{n,j,t},e^{i(\pm2\pi n+t)x})=0,\text{ }\left\Vert h_{n,j,t}\right\Vert
=O(\frac{1}{n}),\text{ }\sup_{x\in\lbrack0,1],\text{ }t\in\lbrack0,\rho]}\mid
h_{n,j,t}(x)\mid=O\left(  \frac{ln\left\vert n\right\vert }{n}\right)  ,
\end{equation}%
\begin{equation}
\left\vert u_{n,j}(t)\right\vert ^{2}+\left\vert v_{n,j}(t)\right\vert
^{2}=1+O(\frac{1}{n^{2}}).
\end{equation}

\end{lemma}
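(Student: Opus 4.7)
The plan is to establish (24) via a Parseval-type expansion of $\Psi_{n,j,t}$, bootstrap (25) from (24) through a self-bounding estimate that couples (22), (23), and (24), and finally deduce the asymptotic decomposition (26)--(28) by combining these tools with Parseval's identity. All estimates will be made $t$-uniform using the $t$-independent ingredients supplied by Remark~1.

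To verify (24) I will observe that $f(x) := \Psi_{n,j,t}(x)\,e^{-itx}$ is $1$-periodic, since $\Psi_{n,j,t}$ satisfies the $t$-periodic boundary conditions, and lies in $W^{2,1}[0,1]$, since $\Psi_{n,j,t}'' = (q - \lambda_{n,j}(t))\Psi_{n,j,t} \in L_1$. Its Fourier coefficients are exactly $A_l := (\Psi_{n,j,t}, e^{i(2\pi l + t)x})$ and decay like $O(1/l^2)$, so the expansion $\Psi_{n,j,t}(x) = \sum_l A_l\, e^{i(2\pi l + t)x}$ converges absolutely and uniformly. Substituting this series into $(q\Psi_{n,j,t}, e^{i(2\pi(n-k)+t)x})$ and interchanging sum with integral (Fubini, legitimate since $q \in L_1$ and $\sum_l |A_l| < \infty$) yields (24).

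For (25), set $a_k := (q\Psi_{n,j,t}, e^{i(2\pi(n-k)+t)x})$ and $B := \sup_{k \in \mathbb{Z}} |a_k|$. Splitting off the $l=n$ and $l=-n$ terms on the right side of (24) gives
\[
a_k = q_{-k}\,u_{n,j}(t) + q_{2n-k}\,v_{n,j}(t) + \sum_{l \neq \pm n} q_{n-k-l}\,A_l,
\]
with $|u_{n,j}|, |v_{n,j}| \leq 1$ by Cauchy--Schwarz. For each $l \neq \pm n$, (23) gives $A_l = a_{n-l}/(\lambda_{n,j}(t) - (2\pi l + t)^2)$, and (22) bounds the denominator below by $|n-l|\,|n+l|$. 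Using $|q_m| \leq M$ and $|a_{n-l}| \leq B$ I obtain
\[
|a_k| \leq 2M + M B \sum_{l \neq \pm n} \frac{1}{|n-l|\,|n+l|} = 2M + MB\cdot O(\ln n / n),
\]
where the last sum is $O(\ln n / n)$ with a $t$-independent constant (split into the ranges $l \leq -1$, $0 < l < n$, $n < l < 2n$, $l \geq 2n+1$ and apply partial fractions). Taking $\sup_k$ and choosing $n$ large enough that $M\cdot O(\ln n / n) < 1/3$, I conclude $B \leq 3M$.

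For (26)--(28), define
\[
h_{n,j,t}(x) := \Psi_{n,j,t}(x) - u_{n,j}(t)\,e^{i(2\pi n + t)x} - v_{n,j}(t)\,e^{i(-2\pi n + t)x},
\]
so that (26) and the orthogonality relations in (27) hold by construction. Combining (22), (23), and the just-proved (25) yields $|A_l| \leq 3M/(|n-l|\,|n+l|)$ for every $l \neq \pm n$. Parseval's identity then gives
\[
\|h_{n,j,t}\|^2 = \sum_{l \neq \pm n} |A_l|^2 \leq 9M^2 \sum_{l \neq \pm n} \frac{1}{(n-l)^2(n+l)^2} = O(1/n^2),
\]
and $\sup_{x \in [0,1]} |h_{n,j,t}(x)| \leq \sum_{l \neq \pm n} |A_l| \leq 3M\sum_{l\neq\pm n}\frac{1}{|n-l|\,|n+l|} = O(\ln n / n)$ follows from the same sum estimate as in the previous paragraph. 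Finally, $1 = \|\Psi_{n,j,t}\|^2 = |u_{n,j}(t)|^2 + |v_{n,j}(t)|^2 + \|h_{n,j,t}\|^2$ gives (28). The main obstacle will be the self-bounding step for (25): I must check that both the $O(\ln n / n)$ constant in the key sum and the threshold $n > N$ are genuinely independent of $t \in [0, \rho]$, which will follow from the $t$-uniform inequality (22) and the $t$-independent choice of $N$ in Remark~1.
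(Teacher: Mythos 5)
Your proposal is correct and follows essentially the same route as the paper: the bound (25) is obtained by the same self-bounding estimate (splitting off the $l=\pm n$ terms, bounding the tail via (22), (23) and the sum $\sum_{k\neq 0,2n}\frac{1}{|k||2n-k|}=O(\ln n/n)$), and (26)--(28) follow from the same coefficient bounds plus Parseval and normalization. The only difference is that you prove (24) directly from the $W^{2,1}$-regularity and absolute Fourier convergence of $\Psi_{n,j,t}e^{-itx}$, whereas the paper simply cites Lemma~1 of [28] for $q\in L_{1}[0,1]$; your argument for that step is sound.
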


\begin{proof}
Equality (24) is obvious for $q\in L_{2}[0,1].$ For $q\in L_{1}[0,1]$\ see
Lemma 1 of [28]. Since $q\Psi_{n,j,t}\in L_{1}[0,1],$ we have
\[
\lim_{\left\vert m\right\vert \rightarrow\infty}(q\Psi_{n,j,t},e^{i(2\pi
(n-k-m)+t)x})=0.
\]
Therefore there exist $C(t)$ and $k_{0}(t)$ such that
\[
\underset{s\in\mathbb{Z}}{max}\left\vert (q\Psi_{n,j,t},e^{i(2\pi
s+t)x})\right\vert =\left\vert (q\Psi_{n,j,t},e^{i(2\pi(n-k_{0})+t)x}%
)\right\vert =C(t).
\]
Now, using (22)-(24) and the obvious relations
\begin{equation}
\left\vert q_{m}\right\vert \leq M,\text{ }\sum_{k\neq0,2n}\frac{1}{\mid
k(2n-k)\mid}=O\left(  \frac{\ln n}{n}\right)
\end{equation}
for $m\in\mathbb{Z},$ we obtain
\[
C(t)=\left\vert (q\Psi_{n,j,t},e^{i(2\pi(n-k_{0})+t)x})\right\vert =\mid
\sum_{m=-\infty}^{\infty}q_{m}(\Psi_{n,j,t},e^{i(2\pi(n-k_{0}-m)+t)x})\mid=
\]%
\[
\mid q_{-k_{0}}(\Psi_{n,j,t},e^{i(2\pi n+t)x})+q_{2n-k_{0}}(\Psi
_{n,j,t},e^{i(-2\pi n+t)x})\mid+
\]%
\begin{align*}
&  \mid\sum_{m\neq-k_{0},2n-k_{0}}q_{m}\frac{(q\Psi_{n,j,t},e^{i(2\pi
(n-k_{0}-m)+t)x})}{\lambda_{n,j}(t)-(2\pi(n-k_{0}-m)+t)^{2}}\mid\leq2M+\\
&  \sum_{m\neq-k_{0},2n-k_{0}}\frac{MC(t)}{\mid\lambda_{n,j}(t)-(2\pi
(n-k_{0}-m)+t)^{2}\mid}=2M+C(t)O\left(  \frac{\ln n}{n}\right)
\end{align*}
which implies that $C(t)<3M$ for all $t\in\lbrack0,\rho]$. Inequality (25) is
proved. This with (23), (22) \ and (29) gives
\[
\sum_{k\neq\pm n}\left\vert (\Psi_{n,j,t},e^{i(2\pi k+t)x})\right\vert
=O\left(  \frac{\ln n}{n}\right)  ,\text{ }\sum_{k\neq\pm n}\left\vert
(\Psi_{n,j,t},e^{i(2\pi k+t)x})\right\vert ^{2}=O\left(  \frac{1}{n^{2}%
}\right)  .
\]
Therefore decomposing $\Psi_{n,j,t}$ by basis $\{e^{i(2\pi k+t)x}%
:k\in\mathbb{Z}\}$ we get (26) and (27). The normalization condition
$\left\Vert \Psi_{n,j,t}\right\Vert =1$ with (26) and (27) implies (28)
\end{proof}

Using (24) in (23), replacing $k$ and $m$ by $0$ and $n_{1}$ respectively and
then isolating the term containing the multiplicand $(\Psi_{n,j,t},e^{i(-2\pi
n+t)x})$ we obtain%
\begin{equation}
(\lambda_{n,j}(t)-(2\pi n+t)^{2})(\Psi_{n,j,t},e^{i(2\pi n+t)x})-q_{2n}%
(\Psi_{n,j,t},e^{i(-2\pi n+t)x})=
\end{equation}%
\[
\sum_{n_{1}\neq0,2n;\text{ }n_{1}=-\infty}^{\infty}q_{n_{1}}(\Psi
_{n,j,t},e^{i(2\pi(n-n_{1})+t)x}).
\]
Now we iterate (30) by using the formula
\begin{equation}
(\Psi_{n,j,t},e^{i(2\pi(n-n_{1})+t)x})=\sum_{n_{2}=-\infty}^{\infty}%
\frac{q_{n_{2}}(\Psi_{n,j,t},e^{i(2\pi(n-n_{1}-n_{2})+t)x})}{\lambda
_{n,j}(t)-(2\pi(n-n_{1})+t)^{2}}%
\end{equation}
obtained from (23) and (24). Taking into account that the denominator of the
fraction in (31) is a large number for $n_{1}\neq0,2n$ and $t\in\lbrack
0,\rho]$ (see (22)), we iterate (30) as follows. Since this iteration is
similar to that done in [1], here we give only the scheme of this iteration.
First, we use (31) in (30), replacing the terms $(\Psi_{n,j,t},e^{i(2\pi
(n-n_{1})+t)x})$ for $n_{1}\neq0,2n$ in (30) by the right-hand side of (31)
and get the summation with respect to $n_{1}$ and $n_{2}$ in the right-hand
side of (30). We then isolate in this summation the terms containing one of
the multiplicands $(\Psi_{n,j,t},e^{i(2\pi n+t)x})$ , $(\Psi_{n,j,t}%
,e^{i(-2\pi n+t)x})$ (i.e., terms with $n_{1}+n_{2}=0,2n$ ) and use (31) in
the other terms. Repeating this process $m-$times, we obtain
\begin{equation}
(\lambda_{n,j}(t)-(2\pi n+t)^{2}-A_{m}(\lambda_{n,j}(t),t))u_{n,j}%
(t)=(q_{2n}+B_{m}(\lambda_{n,j}(t),t))v_{n,j}(t)+R_{m},
\end{equation}
where
\begin{equation}
A_{m}(\lambda_{n,j}(t),t)=\sum_{k=1}^{m}a_{k}(\lambda_{n,j}(t),t),\text{
}B_{m}(\lambda_{n,j}(t),t)=\sum_{k=1}^{m}b_{k}(\lambda_{n,j}(t),t),
\end{equation}%
\[
a_{k}(\lambda_{n,j}(t),t)=\sum_{n_{1},n_{2},...,n_{k}}\frac{q_{n_{1}}q_{n_{2}%
}...q_{n_{k}}q_{-n_{1}-n_{2}-...-n_{k}}}{[\lambda_{n,j}-(2\pi(n-n_{1}%
)+t)^{2}]...[\lambda_{n,j}-(2\pi(n-n_{1}-...-n_{k})+t)^{2}]},
\]%
\[
b_{k}(\lambda_{n,j}(t),t)=\sum_{n_{1},n_{2},...,n_{k}}\frac{q_{n_{1}}q_{n_{2}%
}...q_{n_{k}}q_{2n-n_{1}-n_{2}-...-n_{k}}}{[\lambda_{n,j}-(2\pi(n-n_{1}%
)+t)^{2}]...[\lambda_{n,j}-(2\pi(n-n_{1}-..-n_{k})+t)^{2}]},
\]%
\[
R_{m}=\sum_{n_{1},n_{2},...,n_{m+1}}\frac{q_{n_{1}}q_{n_{2}}...q_{n_{m}%
}q_{n_{m+1}}(q\Psi_{n,j,t},e^{i(2\pi(n-n_{1}-...-n_{m+1})+t)x})}%
{[\lambda_{n,j}-(2\pi(n-n_{1})+t)^{2}]...[\lambda_{n,j}-(2\pi(n-n_{1}%
-...-n_{m+1})+t)^{2}]}.
\]
Note that, here the sums are taken under conditions $n_{s}\neq0$ and
$n_{1}+n_{2}+...+n_{s}\neq0,2n$ for $s=1,2,....$ Using (22), (25) and (29) one
can easily verify that the equalities
\begin{equation}
a_{k}=O\left(  (\frac{\ln\left\vert n\right\vert }{n})^{k}\right)  ,\text{
}b_{k}=O\left(  (\frac{\ln\left\vert n\right\vert }{n})^{k}\right)  ,\text{
}R_{m}=O\left(  (\frac{\ln\left\vert n\right\vert }{n})^{m+1}\right)
\end{equation}
hold uniformly with respect to $t$ in $[0,\rho].$

In the same way the relation
\begin{equation}
(\lambda_{n,j}(t)-(-2\pi n+t)^{2}-A_{m}^{^{\prime}}(\lambda_{n,j}%
(t),t))v_{n,j}(t)=(q_{-2n}+B_{m}^{^{\prime}}(\lambda_{n,j}(t),t))u_{n,j}%
(t)=R_{m}^{^{\prime}}%
\end{equation}
can be obtained, where
\[
A_{m}^{^{\prime}}(\lambda_{n,j}(t),t)=\sum_{k=1}^{m}a_{k}^{\prime}%
(\lambda_{n,j}(t),t),\text{ }B_{m}^{^{\prime}}(\lambda_{n,j}(t))=\sum
_{k=1}^{m}b_{k}^{^{\prime}}(\lambda_{n,j}(t),t),
\]%
\[
a_{k}^{^{\prime}}(\lambda_{n,j}(t),t)=\sum_{n_{1},n_{2},...,n_{k}}%
\frac{q_{n_{1}}q_{n_{2}}...q_{n_{k}}q_{-n_{1}-n_{2}-...-n_{k}}}{[\lambda
_{n,j}-(2\pi(n+n_{1})-t)^{2}]...[\lambda_{n,j}-(2\pi(n+n_{1}+...+n_{k}%
)-t)^{2}]},
\]

\[
b_{k}^{^{\prime}}(\lambda_{n,j}(t),t)=\sum_{n_{1},n_{2},...,n_{k}}%
\frac{q_{n_{1}}q_{n_{2}}...q_{n_{k}}q_{-2n-n_{1}-n_{2}-...-n_{k}}}%
{[\lambda_{n,j}-(2\pi(n+n_{1})-t)^{2}]...[\lambda_{n,j}-(2\pi(n+n_{1}%
+...+n_{k}-t))^{2}]},
\]

\begin{equation}
a_{k}^{^{\prime}}=O\left(  (\frac{ln\left\vert n\right\vert }{n})^{k}\right)
,\text{ }b_{k}^{^{\prime}}=O\left(  (\frac{ln\left\vert n\right\vert }{n}%
)^{k}\right)  ,\text{ }R_{m}^{^{\prime}}=O\left(  (\frac{ln\left\vert
n\right\vert }{n})^{m+1}\right)  ,
\end{equation}
$n_{s}\neq0,$ $n_{1}+n_{2}+...+n_{s}\neq0,-2n$ for $s=1,2,...,k.$

Now in (32) and (35) letting $\ m$ tend to infinity, using (33), (34) and (36)
we obtain
\begin{equation}
(\lambda_{n,j}(t)-(2\pi n+t)^{2}-A(\lambda_{n,j}(t),t))u_{n,j}(t)=(q_{2n}%
+B(\lambda_{n,j}(t),t))v_{n,j}(t),
\end{equation}
\begin{equation}
(\lambda_{n,j}(t)-(-2\pi n+t)^{2}-A^{^{\prime}}(\lambda_{n,j}(t),t))v_{n,j}%
(t)=(q_{-2n}+B^{^{\prime}}(\lambda_{n,j}(t),t))u_{n,j}(t),
\end{equation}
where
\begin{equation}
A(\lambda,t)=\sum_{k=1}^{\infty}a_{k}(\lambda,t),\text{ }B(\lambda
,t)=\sum_{k=1}^{\infty}b_{k}(\lambda,t),\text{ }A^{^{\prime}}=\sum
_{k=1}^{\infty}a_{k}^{\prime},\text{ }B^{^{\prime}}=\sum_{k=1}^{\infty}%
b_{k}^{^{\prime}}.
\end{equation}

The simplicity of the eigenvalues $\lambda_{n,j}(t)$ for $t\in\lbrack0,\rho]$,
$n>N$, under some conditions on the potential $q,$ are obtained from formulas
(37) and (38) by the following scheme: First, we estimate the functions
$A(\lambda,t),B(\lambda,t),$ $A^{^{\prime}}(\lambda,t)$ and $B^{^{\prime}%
}(\lambda,t)$ (see Lemma 2). Next, using \ Lemma 2 we prove that the
eigenvalues $\lambda_{n,j}(t)$ of $L_{t}(q)$ for $t\in\lbrack0,\rho]$ and
$n>N$ are the roots of the equation
\begin{equation}
(\lambda-(2\pi n+t)^{2}-A(\lambda,t))(\lambda-(2\pi n-t)^{2}-A^{^{\prime}%
}(\lambda,t))=(q_{2n}+B(\lambda,t))(q_{-2n}+B^{^{\prime}}(\lambda,t))
\end{equation}
if $q$ satisfies some conditions (see Theorem 1). Then $\lambda_{n,j}(t)$
satisfies at least one of the equations
\begin{equation}
\lambda=(2\pi n+t)^{2}+\frac{1}{2}(A(\lambda,t)+A^{^{\prime}}(\lambda,t))-4\pi
nt-\sqrt{D(\lambda,t)}%
\end{equation}
and
\begin{equation}
\lambda=(2\pi n+t)^{2}+\frac{1}{2}(A(\lambda,t)+A^{^{\prime}}(\lambda,t))-4\pi
nt+\sqrt{D(\lambda,t)},
\end{equation}
where%
\begin{equation}
D(\lambda,t)=(4\pi nt)^{2}+q_{2n}q_{-2n}+D_{1}(\lambda,t)+D_{2}(\lambda,t),
\end{equation}%
\begin{align}
D_{1}(\lambda,t)  &  =8\pi ntC(\lambda,t)+C^{2}(\lambda,t),\text{ }%
C(\lambda,t)=\frac{1}{2}(A(\lambda,t)-A^{^{\prime}}(\lambda,t)),\\
D_{2}(\lambda,t)  &  =q_{2n}B^{^{\prime}}(\lambda,t)+q_{-2n}B(\lambda
,t)+B(\lambda,t)B^{^{\prime}}(\lambda,t)\text{.}\nonumber
\end{align}
To prove\ the simplicity of the eigenvalues $\lambda_{n,1}(t)$ and
$\lambda_{n,2}(t)$ for $t\in\lbrack0,\rho]$ and $n>N,$ we show that one of
these eigenvalues satisfies (41) and the other one satisfies (42) and the
roots of (41) and (42) are different. For this, we prove that the functions
$B(\lambda,t),$ $B^{^{\prime}}(\lambda,t),$ $C(\lambda,t)$ (Lemma 3) and
$\sqrt{D(\lambda,t)}$ (Lemma 4) satisfy some Lipschitz conditions. As a
result, we find the conditions on $q$ that guarantee the simplicity of those
eigenvalues (Theorem 2).

\begin{lemma}
$(a)$ The following equalities hold uniformly with respect to $t$ in
$[0,\rho]:$%
\begin{equation}
A(\lambda_{n,j}(t),t)=O(n^{-1}),\ A^{^{\prime}}(\lambda_{n,j}(t),t)=O(n^{-1}).
\end{equation}

$(b)$ \textit{Let }$q\in W_{1}^{p}[0,1]$\textit{ and (3) holds with some
}$s\leq p.$ Then the equalities
\begin{equation}
B(\lambda_{n,j}(t),t)=o\left(  n^{-s-1}\right)  ,\text{ }B^{^{\prime}}%
(\lambda_{n,j}(t),t)=o\left(  n^{-s-1}\right)
\end{equation}
hold uniformly with respect to $t$ in $[0,\rho].$
\end{lemma}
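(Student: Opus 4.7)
The approach is to estimate the series $A, A', B, B'$ defined in (39) term by term, using the denominator bound (22) together with the decay of the Fourier coefficients $q_n$. In each case, the leading term ($k=1$) requires a sharper estimate, while the tail $\sum_{k\geq 2}$ is absorbed by the crude bound (34).

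For part (a), (34) immediately gives $\sum_{k\geq 2}|a_k| = O((\ln|n|/n)^2) = o(n^{-1})$, and similarly for the $a_k'$, so it suffices to sharpen the $k=1$ term. Starting from
\[
a_1(\lambda_{n,j}(t),t)=\sum_{n_1\neq 0,2n}\frac{q_{n_1}q_{-n_1}}{\lambda_{n,j}(t)-(2\pi(n-n_1)+t)^2},
\]
I would apply the partial-fraction identity $\frac{1}{n_1(2n-n_1)}=\frac{1}{2n}\left(\frac{1}{n_1}+\frac{1}{2n-n_1}\right)$ together with (22) to extract an explicit factor $1/n$. What remains is a sum of the form $\sum_{n_1}|q_{n_1}q_{-n_1}|/|n_1|$, which one controls via Cauchy--Schwarz combined with the uniform bound $|q_n|\leq M$ and the Riemann--Lebesgue decay $|q_n|\to 0$. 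This yields $a_1=O(n^{-1})$ uniformly in $t\in[0,\rho]$; $a_1'$ is treated identically with the roles of $+t$ and $-t$ interchanged.

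For part (b), the stronger hypothesis $q\in W_1^p[0,1]$ with (3) and $s\leq p$ yields, by $s$-fold integration by parts and Riemann--Lebesgue, the sharpened decay $|q_n|=o(n^{-s})$. For the first term
\[
b_1(\lambda_{n,j}(t),t)=\sum_{n_1\neq 0,2n}\frac{q_{n_1}q_{2n-n_1}}{\lambda_{n,j}(t)-(2\pi(n-n_1)+t)^2},
\]
the key observation is that in every summand $|n_1|+|2n-n_1|\geq 2n$, so at least one of $|n_1|,|2n-n_1|$ is $\geq n$, and by pigeonhole at least one of $|q_{n_1}|,|q_{2n-n_1}|$ is $o(n^{-s})$. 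Splitting the sum according to which index is large and applying (22) gives $b_1=o(n^{-s-1})$. For $k\geq 2$, the factor $q_{2n-n_1-\cdots-n_k}$ provides the same $o(n^{-s})$ decay whenever its index is large, while the remaining product of $k-1$ coefficients (bounded by $M^{k-1}$) against $k$ denominators contributes an $O((\ln|n|/n)^{k-1}n^{-1})$ factor by the same computation as in (34), so $b_k=o(n^{-s-1})$ with room to spare. Summing the series and treating $B'$ analogously completes the estimate.

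The main obstacle is maintaining uniformity in $t\in[0,\rho]$, especially near the degenerate endpoint $t=0$ where denominators in (22) become tight for small $|n_1|$ or $|2n-n_1|$. Since both the Fourier coefficients $q_n$ and the lower bound in (22) are $t$-independent, the burden reduces to verifying that the region-splitting arguments in the sharpened $a_1$ and $b_k$ estimates produce $t$-independent constants; this follows from the $t$-uniform containment of $\lambda_{n,j}(t)$ in the disk $D(n,t,\rho)$ prepared in Remark 1, and is the delicate bookkeeping point that distinguishes the present uniform estimate from the pointwise estimates in [1] and [28].
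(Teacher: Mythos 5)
Your treatment of the tails $k\ge 2$ (via (34), (36) and (49)) and your identification of where $t$-uniformity comes from (the $t$-independent denominator bound (22) on the disk of Remark~1) match the paper. The genuine gap is in the $k=1$ terms, where your strategy of taking absolute values and splitting loses a logarithm and therefore does not reach the claimed estimates. For $a_1$: after the partial-fraction step you must control $\sum_{n_1}|q_{n_1}q_{-n_1}|/|n_1|$, but for a general $q\in L_1[0,1]$ (the only hypothesis in part $(a)$) the coefficients satisfy nothing better than $|q_n|\le M$ and $q_n\to 0$, so Cauchy--Schwarz requires $\sum_n |q_n|^2/|n|<\infty$, which can fail (e.g.\ $|q_n|\asymp (\ln n)^{-1/2}$ is realized by an $L^1$ potential and makes the sum diverge). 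Any absolute-value bound here caps out at $a_1=O(n^{-1}\ln n)$, which is exactly the estimate (29) already gives and is weaker than the asserted $O(n^{-1})$. The paper gets the sharp bound by exploiting cancellation, not size: it first replaces the denominators by $4\pi^2 k(2n-k)$ (formula (47)), then symmetrizes $k\leftrightarrow -k$ so the denominator becomes $(2n+k)(2n-k)$, recognizes the resulting sum as the $4n$-th Fourier coefficient of $(G(x,n)-G_0(n))^2$ with $G(x,n)=\int_0^x q(\tau)e^{-4\pi i n\tau}\,d\tau-q_{2n}x$, and integrates by parts once using $G(1,n)=G(0,n)=0$.

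The same logarithm defeats your pigeonhole argument for $b_1$: with $|q_m|=o(m^{-s})$ you get $|b_1|\le o(n^{-s})\sum_{k\neq 0,2n}|k|^{-1}|2n-k|^{-1}=o(n^{-s-1}\ln n)$, not $o(n^{-s-1})$; the argument only closes when $s\ge 1$ (so that $\sum |q_k|/|k|$ converges), whereas the lemma must cover $s=0$. The paper instead reduces $b_1$ to the quantity $S_{2n}=\sum_{k\neq 0,2n}\frac{q_k q_{2n-k}}{k(2n-k)}$ and invokes the cancellation estimate $S_{2n}=o(n^{-s-1})$ proved in [22] by an analogous Parseval/integration-by-parts device. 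So the missing idea in both parts is the same: the $k=1$ sums must be evaluated as oscillatory Fourier coefficients of explicit auxiliary functions rather than estimated termwise in absolute value.
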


\begin{proof}
$(a)$ First, let us prove that
\begin{equation}
a_{1}(\lambda_{n,j}(t),t)=\frac{1}{4\pi^{2}}\sum_{k\neq0,\,2n}\frac
{q_{k}\,q_{-k}}{k(2n-k)}+O\left(  \frac{1}{n}\right)  .
\end{equation}
Using (20) and the inequality $t<\rho$ and taking into account that $\rho$ is
a sufficiently small fixed number, one can see that if $\left\vert
k\right\vert \leq3\left\vert n\right\vert ,$ then
\[
\mid\lambda_{n,j}-(2\pi(n-k)+t)^{2}-4\pi^{2}k(2n-k)\mid\leq\left\vert
n\right\vert .
\]
Conversely, if $\left\vert k\right\vert >3\left\vert n\right\vert $ then
\[
\mid\lambda_{n,j}-(2\pi(n-k)+t)^{2}\mid>k^{2}>n^{2},\text{ }\mid4\pi
^{2}k(2n-k)\mid>k^{2}>n^{2}.
\]
Therefore, taking into account the inequality in (29), we obtain%
\[
\sum_{k:\left\vert k\right\vert >3\left\vert n\right\vert }\frac{q_{k}%
\,q_{-k}}{\lambda_{n,j}-(2\pi(n-k)+t)^{2}}-\frac{1}{4\pi^{2}}\sum
_{k:\left\vert k\right\vert >3\left\vert n\right\vert }\frac{q_{k}\,q_{-k}%
}{k(2n-k)}=O\left(  \frac{1}{n}\right)  ,
\]%
\[
\mid\sum_{k:\left\vert k\right\vert \leq3\left\vert n\right\vert ,k\neq
0,\,2n}(\frac{q_{k}\,q_{-k}}{\lambda_{n,j}-(2\pi(n-k)+t)^{2}}-\frac
{q_{k}\,q_{-k}}{4\pi^{2}k(2n-k)})\mid\leq
\]%
\[
\sum_{k:\left\vert k\right\vert \leq3\left\vert n\right\vert }\frac
{M^{2}\left\vert n\right\vert }{k^{2}(2n-k)^{2}}\leq\sum_{k:\left\vert
k\right\vert \leq\left\vert n\right\vert }\frac{M^{2}\left\vert n\right\vert
}{k^{2}n^{2}}+\sum_{k:\left\vert n\right\vert <\left\vert k\right\vert
\leq3\left\vert n\right\vert }\frac{M^{2}\left\vert n\right\vert }%
{n^{2}(2n-k)^{2}}=O(\frac{1}{n}).
\]
Thus (47) holds. In (47) grouping the terms $\frac{q_{k}\,q_{-k}}{k(2n-k)}$
and $\frac{q_{-k}\,q_{k}}{-k(2n+k)},$ we get
\begin{equation}
a_{1}(\lambda_{n,j}(t),t)=\frac{1}{4\pi^{2}}\sum_{k>0,\text{ }k\neq\,2n}%
\frac{q_{k}\,q_{-k}}{(2n+k)(2n-k)}+O\left(  \frac{1}{n}\right)  .
\end{equation}
To estimate the sum in (48) we consider, as done in [22], the function
\[
G(x,n)=\int_{0}^{x}q(t)e^{-2\pi i(2n)t}dt-q_{2n}x.
\]
The Fourier coefficients $G_{k}(n)=:(G(x,n),e^{2\pi ikx})$ of $G(x,n)$ are
\[
G_{k}({n})=\frac{1}{2\pi ik}q_{2n+k}%
\]
for$\ k\neq0,$ and hence we have
\[
G(x,n)=G_{0}(n)+\sum_{k\neq2n}\frac{q_{k}}{2\pi i(k-2n)}e^{2\pi i(k-2n)x}.
\]
Therefore, using the integration by parts and taking into account the obvious
equalities $G(1,n)=G(0,n)=0,$ $G(x,n)-G_{0}(n)=O(1),$ we obtain
\[
\frac{1}{4\pi^{2}}\sum_{k>0,\text{ }k\neq\,2n}\frac{q_{k}\,q_{-k}%
}{(2n+k)(2n-k)}=\int_{0}^{1}(G(x,n)-G_{0}(n))^{2}e^{2\pi i(4n)x}\,dx=
\]%
\[
\frac{-1}{2\pi i(4n)}\int_{0}^{1}2(G(x,n)-G_{0}(n))(q(x)e^{-2\pi
i(2n)x}-q_{2n})e^{2\pi i(4n)x}\,dx=O\left(  \frac{1}{n}\right)  .
\]
This with (48) and (34) implies the first equality of (45). In the same way,
we get the second equality of (45).

$(b)$ If the assumptions of $(b)$ hold, then
\begin{equation}
q_{2n}=o(n^{-s}),\text{ }q_{n_{1}}q_{n_{2}}\cdots q_{n_{k}}q_{\pm
2n-n_{1}-n_{2}-\dots-n_{k}}=o(n^{-s})
\end{equation}
(see p. 655 of [22]). Using \ this and (22), in a standard way, we get
\begin{equation}
b_{k}(\lambda_{n,j}(t))=o\left(  \frac{\ln^{k}\,n}{n^{k+s}}\right)
=o(n^{-s-1}),\quad b_{k}^{^{\prime}}(\lambda_{n,j}(t))=o\left(  \frac{\ln
^{k}\,n}{n^{k+s}}\right)  =o(n^{-s-1})
\end{equation}
for$\ k\geq2$. Now, it remains to prove that
\begin{equation}
b_{1}(\lambda_{n,j}(t))=o(n^{-s-1}),\quad b_{1}^{^{\prime}}(\lambda
_{n,j}(t))=o(n^{-s-1}).
\end{equation}
Instead of the inequality in (29) using the equality $q_{k}q_{2n-k}=o(n^{-s})$
(see (49)) and arguing as in the proof of (47) we get \
\begin{equation}
b_{1}(\lambda_{n,j}(t),t)=\frac{1}{4\pi^{2}}\sum_{k\neq0,\,2n}\frac
{q_{k}\,q_{2n-k}}{k(2n-k)}+o(n^{-s-1})
\end{equation}
for all $t$ $\in\lbrack0,\rho].$ In [22] (see p. 655) the summation in (52) is
denoted by $S_{2n}$ and it is proved that $S_{2n}=o(n^{-s-1})$ (see p. 658 ).
Thus, from (52) we obtain the first equality of (51). In the same way, we get
the second equality of (51). Now, (46) follows from (50) and (51)
\end{proof}

\begin{theorem}
\textit{ Let }$q\in W_{1}^{p}[0,1]$\textit{ and (3) holds with some }$s\leq
p.$ Suppose (5) holds and
\begin{equation}
\mid q_{2n}\mid>cn^{-s-1}%
\end{equation}
for some $c>0.$ Then the eigenvalues $\lambda_{n,j}(t)$ of $L_{t}(q)$ for
$t\in\lbrack0,\rho]$ and $n>N$ are the roots of equation (40).
\end{theorem}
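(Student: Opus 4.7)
My approach is to read (37) and (38) together as a homogeneous $2\times 2$ linear system for the pair of Fourier coefficients $(u_{n,j}(t),v_{n,j}(t))$. Explicitly, (37)--(38) may be rewritten as
\[
M(\lambda_{n,j}(t),t)\begin{pmatrix} u_{n,j}(t) \\ v_{n,j}(t) \end{pmatrix}=0,
\]
where
\[
M(\lambda,t)=\begin{pmatrix} \lambda-(2\pi n+t)^{2}-A(\lambda,t) & -(q_{2n}+B(\lambda,t)) \\ -(q_{-2n}+B^{\prime}(\lambda,t)) & \lambda-(2\pi n-t)^{2}-A^{\prime}(\lambda,t) \end{pmatrix}.
\]
The identity (40) is exactly $\det M(\lambda_{n,j}(t),t)=0$, so the statement reduces to proving that this $2\times 2$ system has a nontrivial solution.

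The first step is to justify that (37) and (38) really do hold at $\lambda=\lambda_{n,j}(t)$ for every $n>N$ and every $t\in[0,\rho]$. Under the hypotheses of the theorem, Lemma~2 ensures that all the series $A,A^{\prime},B,B^{\prime}$ defined in (39) converge at $\lambda_{n,j}(t)$ uniformly in $t$. Combined with the remainder estimates in (34) and (36), this allows me to pass to the limit $m\to\infty$ in the finite-stage iterations (32) and (35): the quantities $A_{m},B_{m},A_{m}^{\prime},B_{m}^{\prime}$ converge to $A,B,A^{\prime},B^{\prime}$, while $R_{m},R_{m}^{\prime}=O((\ln n/n)^{m+1})\to0$. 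Thus (37) and (38) hold without the approximation error, and together they produce the matrix equation above.

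The second step is to verify that $(u_{n,j}(t),v_{n,j}(t))\ne(0,0)$. This is immediate from (28): $|u_{n,j}(t)|^{2}+|v_{n,j}(t)|^{2}=1+O(n^{-2})\ge 1/2$ for all sufficiently large $n$ and all $t\in[0,\rho]$, so after enlarging $N$ if necessary the vector is nonzero for every $n>N$. Consequently, the coefficient matrix $M(\lambda_{n,j}(t),t)$ is singular, and expanding its determinant yields (40).

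I do not anticipate a genuine obstacle here: Theorem~1 is primarily an algebraic repackaging of the iteration worked out in (32)--(39), and the only nontrivial input is Lemma~2, which provides the convergence of the infinite series and justifies the limit $m\to\infty$. The conditions (3), (5) and $|q_{2n}|>cn^{-s-1}$ are listed because they are the standing hypotheses for Lemma~2(b), and also because they are what one will need in Theorem~2 to separate the two roots of (40); for the present statement only the convergence half of Lemma~2 is used.
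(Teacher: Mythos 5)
Your proof is correct, but it takes a genuinely different route from the paper's. The paper does not use a determinant/singular-matrix argument: it first deduces from (53) and (46) that $q_{2n}+B(\lambda_{n,j}(t),t)\neq0$ and $q_{-2n}+B'(\lambda_{n,j}(t),t)\neq0$, uses this together with (28), (37), (38) to conclude the stronger fact $u_{n,j}(t)\,v_{n,j}(t)\neq0$ (i.e.\ \emph{both} coefficients are nonzero), and then multiplies (37) and (38) side by side and divides by $u_{n,j}v_{n,j}$ to obtain (40). Your argument replaces this with the observation that $M(\lambda_{n,j}(t),t)$ annihilates the nonzero vector $(u_{n,j},v_{n,j})$, so its determinant vanishes; this needs only $(u,v)\neq(0,0)$, which (28) already gives, and so it bypasses (54)--(55) entirely. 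What your route buys is economy and slightly greater generality: as you observe, the arithmetic hypotheses (5) and (53) on $q_{2n}$ are not actually needed for the conclusion of Theorem~1 itself, only for the later separation of the two roots (the paper's proof, by contrast, genuinely invokes them at this step). What the paper's route buys is the by-product $u_{n,j}(t)v_{n,j}(t)\neq0$, which is information the determinant argument does not deliver. One small misattribution: the convergence of the series $A,A',B,B'$ at $\lambda_{n,j}(t)$ and the vanishing of $R_m,R_m'$ come from the geometric estimates (34) and (36), which are established before Lemma~2 and require none of the hypotheses of the theorem; Lemma~2 supplies the finer bounds $A=O(n^{-1})$, $B=o(n^{-s-1})$ used later, not the convergence itself. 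This does not affect the validity of your argument.
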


\begin{proof}
It follows from (53) and (46) that
\begin{equation}
q_{2n}+B(\lambda_{n,j}(t),t)\neq0,\text{ }q_{-2n}+B^{^{\prime}}(\lambda
_{n,j}(t),t)\neq0
\end{equation}
for $t\in\lbrack0,\rho].$ Let us prove that this with formulas (37), (38) and
(28) gives
\begin{equation}
u_{n,j}(t)v_{n,j}(t)\neq0.
\end{equation}
If $u_{n,j}(t)=0$ then by (28) $v_{n,j}(t)\neq0$ and by (37) $q_{2n}%
+B(\lambda_{n,j}(t),t)=0$ which contradicts (54). Similarly, if $v_{n,j}(t)=0$
then by (28) and (38) $q_{-2n}+B^{^{\prime}}(\lambda_{n,j}(t),t)=0$ which
again contradicts (54). Therefore multiplying (37) and (38) side by side and
then using (55), we get the proof of the theorem.
\end{proof}

Now we consider some properties of the functions $\ A(\lambda,t),$
$B(\lambda,t),$ $A^{^{\prime}}(\lambda,t),$ $B^{^{\prime}}(\lambda,t)$ defined
in (39) for $t\in\lbrack0,\rho]$ and $\lambda\in D(n,t,\rho),$ where
$D(n,t,\rho)$ is the disk defined in (21).

\begin{lemma}
$(a)$ There exists a constant $K$, independent of $\ n>N$ and $t\in
\lbrack0,\rho],$ such that%
\begin{equation}
\mid A(\lambda,t)-A(\mu,t)\mid<Kn^{-2}\mid\lambda-\mu\mid,\text{ }\mid
A^{^{\prime}}(\lambda,t)-A^{^{\prime}}(\mu,t)\mid<Kn^{-2}\mid\lambda-\mu\mid,
\end{equation}%
\begin{equation}
\mid C(\lambda_{n,j}(t),t)\mid<tKn^{-1},\text{ }\mid C(\lambda,t))-C(\mu
,t))\mid<tKn^{-2}\mid\lambda-\mu\mid
\end{equation}
for all $\lambda,\mu\in D(n,t,\rho),$ where $N$ is defined in Remark 1.

$(b)$ \textit{Let }$q\in W_{1}^{p}[0,1],$ \textit{and (3) holds with some
}$s\leq p.$ Then the functions $b_{k}(\lambda,t),$ $b_{k}^{^{\prime}}%
(\lambda,t),$ $B(\lambda,t),$ $B^{^{\prime}}(\lambda,t)$ for $\lambda,\mu\in
D(n,t,\rho),$ $k=1,2,...,$ satisfy the following, uniform with respect to $t$
in $[0,\rho],$ equality
\begin{equation}
f(\lambda,t)-f(\mu,t)=(\lambda-\mu)o(n^{-s-2}).
\end{equation}

\end{lemma}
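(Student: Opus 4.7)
Both the Lipschitz and the $C$-estimates rest on the telescoping identity
\[
\prod_{s=1}^{k}\frac{1}{\lambda-x_{s}}-\prod_{s=1}^{k}\frac{1}{\mu-x_{s}}=(\mu-\lambda)\sum_{j=1}^{k}\frac{1}{(\lambda-x_{j})(\mu-x_{j})}\prod_{s<j}\frac{1}{\lambda-x_{s}}\prod_{s>j}\frac{1}{\mu-x_{s}},
\]
supplemented, for the bound on $C$, by a combinatorial symmetry that makes $C(\lambda,0)$ vanish. First I would extend (22) to all $\lambda\in D(n,t,\rho)$ (an immediate consequence of (20)), so that each $|\lambda-(2\pi(n-n_{1}-\cdots-n_{s})+t)^{2}|$ is bounded below by $c\,|(n_{1}+\cdots+n_{s})(2n-n_{1}-\cdots-n_{s})|$.

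\textbf{Lipschitz bounds (56) and (58).} Applying the identity to $a_{k}(\lambda,t)-a_{k}(\mu,t)$, the singled-out factor is bounded by $c\,|k_{j}(2n-k_{j})|^{-2}$ with $k_{j}=n_{1}+\cdots+n_{j}$. Since $\sum_{k}|k(2n-k)|^{-2}=O(n^{-2})$ while the remaining $k-1$ denominators obey the usual $|k(2n-k)|^{-1}$ bounds, the $\vec n$-sum for each fixed $j$ is $O((\ln n/n)^{k-1}n^{-2})$. Summing over $j=1,\ldots,k$ and then the geometric series in $k$ (ratio $\ln n/n$) yields $|A(\lambda,t)-A(\mu,t)|<Kn^{-2}|\lambda-\mu|$, and identically for $A'$. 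For (58), the same machinery applied to $b_{k},b_{k}'$ replaces the crude $|q_{m}|\leq M$ on one Fourier factor by the refined $|q_{n_{1}}\cdots q_{n_{k}}q_{\pm2n-n_{1}-\cdots-n_{k}}|=o(n^{-s})$ of (49), giving the claimed $o(n^{-s-2})|\lambda-\mu|$ decay.

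\textbf{The bound on $C$.} The decisive point is that $a_{k}(\lambda,0)=a_{k}'(\lambda,0)$ for every $k$, so $C(\lambda,0)\equiv 0$ on $D(n,t,\rho)$. To establish this I would first substitute $n_{i}\mapsto -n_{i}$ in $a_{k}(\lambda,0)$: the denominators and constraint set become those of $a_{k}'(\lambda,0)$, but the numerator becomes the conjugate $q_{-n_{1}}\cdots q_{-n_{k}}q_{n_{1}+\cdots+n_{k}}$. Next apply the bijection $\widetilde n_{1}=n_{1}+\cdots+n_{k}$, $\widetilde n_{j}=-n_{k+2-j}$ for $j=2,\ldots,k$; a direct check shows it preserves both the $a_{k}'$ denominator and its constraint set, while rewriting the conjugate numerator in the original form $q_{\widetilde n_{1}}\cdots q_{\widetilde n_{k}}q_{-\widetilde n_{1}-\cdots-\widetilde n_{k}}$. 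This reflects the $\xi\mapsto-\xi$ invariance of the Hill equation at $t=0$. With $C(\lambda,0)=0$, the first inequality of (57) comes from $C(\lambda_{n,j}(t),t)=\int_{0}^{t}\partial_{\tau}C(\lambda_{n,j}(t),\tau)\,d\tau$: writing $p_{s}=n-n_{1}-\cdots-n_{s}$, each denominator change $(2\pi p_{s}+\tau)^{2}-(2\pi p_{s})^{2}=4\pi p_{s}\tau+\tau^{2}$ extracts an explicit $\tau$ and an extra $|p_{s}|$, the latter absorbed by $|p_{s}(2n-p_{s})|^{-1}$ with a net gain of $n^{-1}$. The Lipschitz bound in (57) follows from $C(\lambda,0)-C(\mu,0)=0$: one writes $C(\lambda,t)-C(\mu,t)=\int_{0}^{t}\partial_{\tau}[C(\lambda,\tau)-C(\mu,\tau)]\,d\tau$ and combines the $\tau$-differentiation above with the Lipschitz telescoping used for (56).

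\textbf{Main obstacle.} The subtle ingredient is the combinatorial identity $a_{k}(\lambda,0)=a_{k}'(\lambda,0)$ for arbitrary $k$. It requires recognizing the cyclic structure of the numerator $q_{n_{1}}\cdots q_{n_{k}}q_{-n_{1}-\cdots-n_{k}}$ and carefully verifying that the path-reversal bijection maps the asymmetric constraint $\sum_{i\leq s}n_{i}\neq 2n$ exactly into $\sum_{i\leq s'}\widetilde n_{i}\neq -2n$ under the reindexing $s'=k+1-s$. Without this vanishing, the cross-term $8\pi ntC$ in (44) would contaminate the discriminant and spoil the clean splitting into equations (41) and (42) used later to separate $\lambda_{n,1}(t)$ from $\lambda_{n,2}(t)$.
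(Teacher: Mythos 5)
Your proposal is correct and follows essentially the same route as the paper: the Lipschitz bounds come from term-by-term estimation of the difference of denominator products (the paper phrases your telescoping identity as termwise differentiation in $\lambda$ together with (59)--(60)), and your key combinatorial identity $a_k(\lambda,0)=a_k'(\lambda,0)$ is exactly the content of the paper's index substitution $-n_1-\cdots-n_k=j_1$, $n_2=j_k,\dots,n_k=j_2$, which recasts $a_k'$ as $a_k$ with $t$ replaced by $-t$ in the denominators. The only cosmetic difference is that you extract the factor $t$ in (57) by integrating $\partial_\tau C$ from $0$ to $t$, whereas the paper directly bounds $\sum_k\bigl|\,(\lambda-(2\pi(n-k)+t)^2)^{-1}-(\lambda-(2\pi(n-k)-t)^2)^{-1}\bigr|=tO(1/n)$.
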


\begin{proof}
$(a)$ If $\lambda\in D(n,t,\rho),$ then%
\begin{equation}
\left\vert \lambda-(2\pi(n-k)+t)^{2}\right\vert >\left\vert k\right\vert
\left\vert 2n-k\right\vert ,\text{ }\forall k\neq0,2n.
\end{equation}
To prove estimations (56) and (57) we use (59) and the following obvious
equality
\begin{equation}
\sum_{k\neq0,-2n}\frac{1}{\mid k^{s}(2n-k)^{m}\mid}=O\left(  \frac{1}{n^{p}%
}\right)
\end{equation}
if $\max\{s,m\}\geq2,$ where $p=\min\{s,m\}\geq1.$ By (60) and (59) the series
in the formulas for the functions $a_{k}(\lambda,t),$ $a_{k}^{^{\prime}%
}(\lambda,t),$ $b_{k}(\lambda,t),$ $b_{k}^{^{\prime}}(\lambda,t)$ converge
uniformly in a neighborhood of $\lambda.$ It implies that these functions
continuously depend on $\lambda.$ Moreover, estimations (34) and (36) hold if
we replace $\lambda_{n,j}(t)$ by $\lambda.$ Therefore the series in the
formulas for the functions $A(\lambda,t),$ $B(\lambda,t),$ $A^{^{\prime}%
}(\lambda,t)$ and $B^{^{\prime}}(\lambda,t)$ converge uniformly in a
neighborhood of $\lambda.$ Using (59) and (60) one can easily verify that
these series can be differentiated, with respect to $\lambda,$ term by term.
Moreover, taking into account the inequality
\[
\mid\frac{d}{d\lambda}(\frac{1}{\lambda-(2\pi(n-k)+t)^{2}})\mid\leq\frac
{1}{k^{2}(2n-k)^{2}}%
\]
and (60), we see that the absolute values of the derivatives of $a_{k}%
(\lambda,t),$ $a_{k}^{^{\prime}}(\lambda,t),$ $b_{k}(\lambda,t),$
$b_{k}^{^{\prime}}(\lambda,t)$ \ with respect to $\lambda$ is $O(n^{-k-1}).$
Therefore, these functions satisfy the condition
\begin{equation}
g(\lambda,t)-g(\mu,t)=(\lambda-\mu)O(n^{-k-1}).
\end{equation}
Now (56) follows from (61).

To prove the first inequality of (57) we use substitutions $-n_{1}-n_{2}%
-\dots-n_{k}=j_{1},$

$\ \,n_{2}=j_{k},\ \,n_{3}=j_{k-1},\,\dots,\,n_{k}=j_{2}$ in the formula for
the expression $a_{k}^{\prime}.$ Then the inequalities for the forbidden
indices $n_{p}\neq0,$ $n_{1}+n_{2}+\dots+n_{p}\neq0,-2n$ for $1\leq p\leq k$
in the formula for $a_{k}^{\prime}$ take the form $j_{p}\neq0,$ $j_{1}%
+j_{2}+...+j_{p}\neq0,2n$ for $1\leq p\leq k$, and
\[
a_{k}^{^{\prime}}(\lambda_{n,j}(t))=\sum_{n_{1},n_{2},...,n_{k}}\frac
{q_{n_{1}}q_{n_{2}}...q_{n_{k}}q_{-n_{1}-n_{2}-...-n_{k}}}{[\lambda
_{n,j}-(2\pi(n-n_{1})-t)^{2}]...[\lambda_{n,j}-(2\pi(n-n_{1}-...-n_{k}%
)-t)^{2}]}.
\]
Using (22) and (60) one can readily see that
\[
\sum_{\substack{k=-\infty,\\k\neq0,2n}}^{\infty}\mid\frac{1}{\lambda
_{n,j}(t)-(2\pi(n-k)+t)^{2}}-\frac{1}{\lambda_{n,j}(t)-(2\pi(n-k)-t)^{2}}%
\mid=tO(\frac{1}{n}).
\]
This with the inequality in (29) implies the first inequality in (57). Now
arguing as in the proof of (56), we get the proof of \ the second inequality
of (57).

$(b)$ Using (49) and repeating the proof of (56) we get the proof of $(b)$
\end{proof}

\begin{lemma}
Suppose the conditions of Theorem 1 hold. If at least one of the inequalities
\begin{equation}
\operatorname{Re}q_{2n}q_{-2n}\geq0,
\end{equation}%
\begin{equation}
\mid\operatorname{Im}q_{2n}q_{-2n}\mid\geq\varepsilon\mid q_{2n}q_{-2n}\mid
\end{equation}
are satisfied for some $\varepsilon>0$ and for $n>N,$ where $N$ is defined in
Remark 1, then there exists a constant $c_{1}$ such that
\begin{equation}
\left\vert \sqrt{D(\lambda_{n,1}(t),t)}-\sqrt{D(\lambda_{n,2}(t),t)}%
\right\vert <c_{1}n^{-1}\mid\lambda_{n,1}(t)-\lambda_{n,2}(t)\mid
\end{equation}
for $\ n>N$ and $t\in\lbrack0,\rho].$
\end{lemma}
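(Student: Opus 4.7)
The plan is to work on the disk $D(n,t,\rho)$, which contains both $\lambda_{n,1}(t)$ and $\lambda_{n,2}(t)$ by Remark~1, and to apply the fundamental theorem of calculus to the function $\lambda\mapsto\sqrt{D(\lambda,t)}$ along the straight segment joining $\lambda_{n,2}(t)$ to $\lambda_{n,1}(t)$. Since $D(n,t,\rho)$ is simply connected, a single-valued holomorphic branch of the square root is available as soon as $D(\lambda,t)$ is bounded away from zero throughout the disk. Writing
\[
\sqrt{D(\lambda_{n,1}(t),t)}-\sqrt{D(\lambda_{n,2}(t),t)}=\int_{\lambda_{n,2}(t)}^{\lambda_{n,1}(t)}\frac{\partial_\lambda D(\lambda,t)}{2\sqrt{D(\lambda,t)}}\,d\lambda,
\]
the lemma reduces to showing that the integrand is $O(n^{-1})$ uniformly in $t\in[0,\rho]$ and $\lambda\in D(n,t,\rho)$.

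The crucial step is the lower bound on $|D(\lambda,t)|$. Splitting $D=D_0+D_1+D_2$ with $D_0=(4\pi nt)^2+q_{2n}q_{-2n}$, I expand
\[
|D_0|^2=(4\pi nt)^4+2(4\pi nt)^2\operatorname{Re}(q_{2n}q_{-2n})+|q_{2n}q_{-2n}|^2.
\]
Under (62) the cross term is $\ge0$, giving $|D_0|^2\ge(4\pi nt)^4+|q_{2n}q_{-2n}|^2$. Under (63) one has $|\operatorname{Re}(q_{2n}q_{-2n})|\le\sqrt{1-\varepsilon^2}|q_{2n}q_{-2n}|$, and AM--GM bounds the cross term below by $-\sqrt{1-\varepsilon^2}((4\pi nt)^4+|q_{2n}q_{-2n}|^2)$. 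Combining either case with $\sqrt{a^2+b^2}\ge(a+b)/\sqrt{2}$ yields a constant $c'>0$ such that $|D_0|\ge c'((4\pi nt)^2+|q_{2n}q_{-2n}|)$. The corrections are smaller: Lemma~3(a) (pointwise plus Lipschitz) gives $|C(\lambda,t)|=O(t/n)$ on $D(n,t,\rho)$, hence $|D_1(\lambda,t)|=O(t^2)$, which for large $n$ is dominated by $\tfrac{c'}{4}(4\pi nt)^2$ (since $16\pi^2n^2t^2\gg t^2$); Lemma~2(b) combined with the Lipschitz estimate (58) gives $|B(\lambda,t)|,\,|B'(\lambda,t)|=o(n^{-s-1})$ on the disk, and together with $|q_{\pm2n}|\ge cn^{-s-1}$ from (53) and (5) this yields $|D_2(\lambda,t)|=o(|q_{2n}q_{-2n}|)$. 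Absorbing the errors,
\[
|D(\lambda,t)|\ge\tfrac{c'}{2}\bigl((4\pi nt)^2+|q_{2n}q_{-2n}|\bigr)>0,\qquad\lambda\in D(n,t,\rho),
\]
so $\sqrt{D(\lambda,t)}$ admits a single-valued holomorphic branch on the disk.

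For the derivative, letting $\mu\to\lambda$ in (57) and (58) gives $|\partial_\lambda C|=O(t/n^2)$ and $|\partial_\lambda B|,\,|\partial_\lambda B'|=o(n^{-s-2})$; using also $|q_{\pm2n}|=o(n^{-s})$ from (49) one obtains $|\partial_\lambda D_1(\lambda,t)|=O(t^2/n)$ and $|\partial_\lambda D_2(\lambda,t)|=o(n^{-2s-2})$. Combined with $|\sqrt{D(\lambda,t)}|\ge\sqrt{c'/2}\max(4\pi nt,\,cn^{-s-1})$, the contribution of $D_1$ to $|\partial_\lambda D|/(2|\sqrt{D}|)$ is $\le O(t^2/n)/(4\pi nt\sqrt{c'/2})=O(t/n^2)=O(\rho/n^2)$ when $t>0$ and vanishes when $t=0$, while the contribution of $D_2$ is $\le o(n^{-2s-2})/(cn^{-s-1}\sqrt{c'/2})=o(n^{-s-1})$; both are $O(n^{-1})$ uniformly in $t\in[0,\rho]$. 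Integrating over the segment delivers (64). The main obstacle is the lower bound on $|D(\lambda,t)|$: the disjunction (62)--(63) is precisely what is needed to prevent $D_0$ from vanishing, and without staying away from $0$ no single-valued branch of $\sqrt{D}$ could be defined on the disk and the estimate (64) would be meaningless.
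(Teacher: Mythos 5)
Your proof is correct and follows essentially the same route as the paper: the same decomposition $D=D_0+D_1+D_2$, the same use of the dichotomy (62)/(63) to bound $|D_0|$ below by a multiple of $(4\pi nt)^2+|q_{2n}q_{-2n}|$ (the paper's (65)--(69)), and the same Lipschitz estimates from Lemma 3 to control the $\lambda$-variation of $D_1$ and $D_2$. The only difference is the final packaging: you integrate $\partial_\lambda\sqrt{D(\lambda,t)}$ along the segment joining the two eigenvalues, which obliges you to extend the lower bound on $|D|$ to the whole disk $D(n,t,\rho)$ in order to fix a holomorphic branch, whereas the paper uses the identity $\sqrt{D(\lambda_{n,1},t)}-\sqrt{D(\lambda_{n,2},t)}=\bigl(D(\lambda_{n,1},t)-D(\lambda_{n,2},t)\bigr)/\bigl(\sqrt{D(\lambda_{n,1},t)}+\sqrt{D(\lambda_{n,2},t)}\bigr)$ together with (71) and (72), needing the bounds only at the two eigenvalues.
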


\begin{proof}
First let us prove that%
\begin{equation}
\left\vert D(\lambda_{n,j}(t),t)\right\vert >\frac{\varepsilon}{4}(\left\vert
q_{-2n}q_{2n}\right\vert +(4\pi nt)^{2}),
\end{equation}%
\begin{equation}
D(\lambda_{n,j}(t),t)=((4\pi nt)^{2}+q_{2n}q_{-2n})(1+o(1))
\end{equation}
for $\ n>N$ and $t\in\lbrack0,\rho].$ If follows from (44), (57) and (53),
(46), (5) that
\begin{equation}
D_{1}(\lambda_{n,j}(t),t)=t^{2}O(1),\text{ }D_{2}(\lambda_{n,j}(t),t)=o(q_{2n}%
n^{-s-1})=o(q_{2n}q_{-2n}).
\end{equation}
Therefore, we have%
\begin{equation}
D_{1}(\lambda_{n,j}(t),t)+\text{ }D_{2}(\lambda_{n,j}(t),t)=o(\left\vert
q_{-2n}q_{2n}\right\vert +(4\pi nt)^{2}).
\end{equation}
Thus, by (43), to prove (65) and (66) it is enough to show that%
\begin{equation}
\left\vert q_{-2n}q_{2n}+(4\pi nt)^{2}\right\vert >\frac{\varepsilon}%
{3}(\left\vert q_{-2n}q_{2n}\right\vert +(4\pi nt)^{2}).
\end{equation}
\ For this we consider \ two cases. First case: $(4\pi nt)^{2}\leq2\left\vert
q_{2n}q_{-2n}\right\vert ).$ Then
\begin{equation}
\left\vert q_{2n}q_{-2n}\right\vert \geq\frac{1}{3}(\left\vert q_{2n}%
q_{-2n}\right\vert +(4\pi nt)^{2}).
\end{equation}
If (62) holds then$\ \left\vert q_{2n}q_{-2n}+(4\pi nt)^{2}\right\vert
\geq\left\vert q_{2n}q_{-2n}\right\vert .$ Therefore, (69) follows from (70).
If (63) holds then$\ \left\vert q_{2n}q_{-2n}+(4\pi nt)^{2}\right\vert
\geq\mid\operatorname{Im}q_{2n}q_{-2n}\mid\geq\varepsilon\left\vert
q_{2n}q_{-2n}\right\vert $ and again (69) follows from (70). Now let us
consider the second case: $(4\pi nt)^{2}>2\left\vert q_{2n}q_{-2n}\right\vert
).$ Then%
\[
\left\vert q_{2n}q_{-2n}+(4\pi nt)^{2}\right\vert >(4\pi nt)^{2}-\left\vert
q_{2n}q_{-2n}\right\vert >\frac{1}{3}(\left\vert q_{2n}q_{-2n}\right\vert
+(4\pi nt)^{2}),
\]
that is, (69) holds. Thus, (65) and (66) are proved.

Using (43), (57), (49), (46) and Lemma 3$(b)$ one can easily verify that%
\begin{equation}
\mid D(\lambda_{n,1}(t),t)-D(\lambda_{n,2}(t),t)\mid\leq(5\pi t^{2}%
Kn^{-1}+n^{-2s-2})\mid\lambda_{n,1}(t)-\lambda_{n,2}(t)\mid.
\end{equation}
On the other hand, it follows from (67), (66) and (53), (5) that there exists
a constant $c_{2}$ such that
\begin{equation}
\left\vert \sqrt{D(\lambda_{n,1}(t),t)}+\sqrt{D(\lambda_{n,2}(t),t)}%
\right\vert =\left\vert 2\sqrt{D(\lambda_{n,1}(t),t)}(1+o(1))\right\vert
>c_{2}(n^{-s-1}+nt).
\end{equation}
Thus, from (71) and (72) we obtain (64).
\end{proof}

Now using Lemma 2-4 and Theorem 1 we prove the following main result.

\begin{theorem}
\textit{ Let }$q\in W_{1}^{p}[0,1]$\textit{ and (3) holds with some }$s\leq
p.$ Suppose (5) and (53) are satisfied. \textit{If at least one of the
inequalities (62) and (63) hold,} then the eigenvalues $\lambda_{n,j}(t)$ of
$L_{t}(q)$ for $n>N,$ $j=1,2$ and $t\in\lbrack0,\rho]$ are simple and
\begin{equation}
\lambda_{n,j}(t)=(2\pi n+t)^{2}+\frac{1}{2}(A(\lambda_{n,j}(t),t)+A^{^{\prime
}}(\lambda_{n,j}(t),t))-4\pi nt+(-1)^{j}\sqrt{D(\lambda_{n,j}(t),t)}.
\end{equation}

\end{theorem}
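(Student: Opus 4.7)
The plan is to combine Theorem~1 with the Lipschitz estimates from Lemmas~3 and~4. First, I would reduce equation (40) to the disjunction ``(41) or (42)''. Introducing the shift $X = \lambda - (2\pi n+t)^2 - A(\lambda,t)$ and using the identity $(2\pi n+t)^2 - (2\pi n-t)^2 = 8\pi nt$ together with the definition of $C(\lambda,t)$, equation (40) becomes
\[
X^2 + (8\pi nt + 2C(\lambda,t))\,X = (q_{2n}+B(\lambda,t))(q_{-2n}+B'(\lambda,t)),
\]
a quadratic in $X$ whose discriminant works out to $4D(\lambda,t)$ as in (43)--(44). The quadratic formula gives $X = -(4\pi nt + C)\pm\sqrt{D}$, and undoing the shift produces (41) and (42). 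Combined with Theorem~1, every eigenvalue $\lambda_{n,j}(t)$ satisfies one of these two branches, i.e., (73) holds for some sign choice.

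Second, I would show that $\lambda_{n,1}(t)$ and $\lambda_{n,2}(t)$ lie on opposite branches. If both satisfied (41), subtracting the two identities and applying (56) from Lemma~3 to the $A,A'$ differences, together with (64) from Lemma~4 to the $\sqrt{D}$ difference, would give
\[
|\lambda_{n,1}(t)-\lambda_{n,2}(t)| \le (Kn^{-2}+c_1 n^{-1})\,|\lambda_{n,1}(t)-\lambda_{n,2}(t)|,
\]
forcing $\lambda_{n,1}(t)=\lambda_{n,2}(t)$ for $n$ sufficiently large. The same argument covers the case both satisfy (42). Thus either the two eigenvalues coincide at a double eigenvalue $\lambda^*$, or one lies on each branch.

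Third, both remaining scenarios collapse to a contradiction with the non-degeneracy of $D$. If $\lambda_{n,1}(t)=\lambda_{n,2}(t)=\lambda^*$, then $\lambda^*$ is a double root of (40), which in the quadratic-in-$X$ reduction of step~1 forces the discriminant $4D(\lambda^*,t)$ to vanish; equivalently $\lambda^*$ satisfies both (41) and (42), so $2\sqrt{D(\lambda^*,t)}=0$. But the sharp lower bound (65) established inside Lemma~4,
\[
|D(\lambda_{n,j}(t),t)| \ge \tfrac{\varepsilon}{4}\bigl(|q_{2n}q_{-2n}|+(4\pi nt)^2\bigr) > 0,
\]
valid under the hypotheses (53), (5), and (62) or (63), gives the desired contradiction. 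Hence $\lambda_{n,1}(t)\ne\lambda_{n,2}(t)$, each is simple, and (73) holds with $(-1)^j$ correctly paired to the appropriate branch.

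The main obstacle is the Lipschitz estimate (64) on $\sqrt{D}$. Since $D$ itself may be of order $n^{-2s-2}$ (comparable to $|q_{2n}|^2$) while the spacing $|\lambda_{n,1}-\lambda_{n,2}|$ is of the same small order, a naive estimate would make the Lipschitz constant of $\sqrt{D}$ of order one and render the contraction bound in step~2 useless. It is precisely the sharp lower bound (65), which in turn requires the sign hypothesis (62) or (63), that provides the $n^{-1}$ gain inside (64) and allows the separation argument to close.
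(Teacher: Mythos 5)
Your steps 1 and 2 match the paper: the completion-of-the-square reduction of (40) to the alternative ``(41) or (42)'' is exactly how $D(\lambda,t)$ arises, and the same-branch case is handled in the paper by precisely your contraction estimate (equations (74)--(76), using (56) and (64)). The difficulty is your step 3, and there is a genuine gap there. You claim that if $\lambda_{n,1}(t)=\lambda_{n,2}(t)=\lambda^{*}$ then $\lambda^{*}$ is a \emph{double root} of (40) and hence the discriminant $D(\lambda^{*},t)$ must vanish. But Theorem 1 only asserts that each eigenvalue \emph{is a root} of (40); it is a necessary condition extracted from (37)--(38) after dividing by $u_{n,j}v_{n,j}\neq 0$, not a characteristic equation whose root multiplicities are known to agree with eigenvalue multiplicities. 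Nothing proved up to that point prevents a double eigenvalue of $L_{t}(q)$ from sitting on a single branch, say satisfying (41) only, with $D(\lambda^{*},t)\neq 0$. Moreover (40) is not a genuine quadratic in $\lambda$ (the coefficients $A,A',B,B'$ all depend on $\lambda$), so even granting ``double root'' one would still need an argument that a double root forces the two branches to coincide rather than arising some other way; that in turn would require a Rouch\'e-type count identifying the two eigenvalues in $D(n,t,\rho)$, with multiplicity, with the two branch roots --- which is never established.

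The paper closes this case by a completely different, topological argument that you are missing. From your step 2 and the opposite-branch lower bound (the analogue of your estimate, giving $|\lambda_{n,1}(t)-\lambda_{n,2}(t)|>c_{3}(n^{-s-1}+nt)$ via (72) and (75)), one obtains the dichotomy: for each fixed $t\in[0,\rho]$ the gap $d_{n}(t)=|\lambda_{n,1}(t)-\lambda_{n,2}(t)|$ lies in the disjoint union $\{0\}\cup(c_{3}(n^{-s-1}+nt),\infty)$. Remark 1 establishes (via Rouch\'e and the implicit function theorem) that $\lambda_{n,1}(t),\lambda_{n,2}(t)$, hence $d_{n}$, can be chosen continuous on all of $[0,\rho]$, so $\{d_{n}(t):t\in[0,\rho]\}$ is connected; and at the endpoint $t=\rho$ the eigenvalues are already known to be simple and separated by Result 1 (formula (9) from [28]), so $d_{n}(\rho)>0$. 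Connectedness then forces $d_{n}(t)>0$ on all of $[0,\rho]$. Without this continuation-from-$t=\rho$ argument (or some substitute establishing the multiplicity count for (40)), your proof does not exclude the coincidence case, which is exactly the case the simplicity claim is about.
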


\begin{proof}
If both eigenvalues $\lambda_{n,1}(t)$ and $\lambda_{n,2}(t)$ of the operator
$L_{t}(q)$ lying in the disk $D(n,t,\rho)$ (see (21)) satisfy equation (41),
then
\begin{equation}
\lambda_{n,1}(t)-\lambda_{n,2}(t)=[\frac{1}{2}(A(\lambda_{n,1}(t),t)-A(\lambda
_{n,2}(t),t))+\frac{1}{2}(A^{^{\prime}}(\lambda_{n,1}(t),t)-A^{^{\prime}%
}(\lambda_{n,2}(t),t)]+
\end{equation}%
\[
\left[  \sqrt{D(\lambda_{n,1}(t),t)}-\sqrt{D(\lambda_{n,2}(t),t)}\right]  .
\]
By (56) we have%
\begin{equation}
\mid A(\lambda_{n,1},t)-A(\lambda_{n,2},t)+A^{^{\prime}}(\lambda
_{n,1},t)-A^{^{\prime}}(\lambda_{n,2},t)\mid<2Kn^{-2}\mid\lambda
_{n,1}(t)-\lambda_{n,2}(t)\mid.
\end{equation}
Thus, using this and (64) in (74) we get%
\begin{equation}
\lambda_{n,1}(t)=\lambda_{n,2}(t).
\end{equation}
In the same way, we prove that if both $\lambda_{n,1}(t)$ and $\lambda
_{n,2}(t)$ satisfy (42) then (76) holds.

Now suppose that one of them, say $\lambda_{n,1}(t),$ satisfies (41) and the
other $\lambda_{n,2}(t)$ satisfies (42). Then
\begin{equation}
\lambda_{n,1}(t)-\lambda_{n,2}(t)=[\frac{1}{2}(A(\lambda_{n,1},t)-A(\lambda
_{n,2},t))+\frac{1}{2}(A^{^{\prime}}(\lambda_{n,1},t)-A^{^{\prime}}%
(\lambda_{n,2},t)]+
\end{equation}%
\[
\left[  \sqrt{D(\lambda_{n,1}(t),t)}+\sqrt{D(\lambda_{n,2}(t),t)}\right]  .
\]
Therefore, by (75) and (72) there exists a constant $c_{3}$ such that
\begin{equation}
\mid\lambda_{n,1}(t)-\lambda_{n,2}(t)\mid>c_{3}(n^{-s-1}+nt).
\end{equation}

Now it follows from (76) and (78) that the value $d_{n}(t)$ of the function
$d_{n}$, defined in (19), for $t\in\lbrack0,\rho]$ belongs to the union of the
disjoint sets $(c_{3}(n^{-s-1}+nt),\infty)$ and $\{0\}.$ Moreover, as it is
proved in Remark 1, $d_{n}$ is a continuous function on $[0,\rho]$ which
implies that the set $\{d_{n}(t):t\in\lbrack0,\rho]\}$ is a connected set.
Therefore, taking into account that $d_{n}(\rho)\in(c_{3}(n^{-s-1}%
+nt),\infty)$ (see (9)), we get $\{d_{n}(t):t\in\lbrack0,\rho]\}\in
(c_{3}(n^{-s-1}+nt),\infty).$ This means that $\lambda_{n,1}(t)$ and
$\lambda_{n,2}(t)$ are different simple eigenvalues and one of them satisfies
(41) and the other satisfies (42). Without loss of generality, it can be
assumed that (73) holds where square root in (73) is taken with positive real
part. Note that
\begin{equation}
\operatorname{Re}(\sqrt{D(\lambda_{n,j}(t),t)})\neq0
\end{equation}
due to the following reason. By (53), (62)) and (63) $\left\vert \arg
(q_{-2n}q_{2n}+(4\pi nt)^{2})\right\vert $ $<\pi-\alpha,$ where $\alpha
\in(0,\pi).$ Thus, by (65) and (66) $\arg D(\lambda_{n,j}(t),t)\neq\pi$ and
hence (79) holds
\end{proof}

Now to prove the uniform asymptotic formulas for the eigenfunctions
$\Psi_{n,j,t}(x)$ we need consider $v_{n,j}(t)$ and $u_{n,j}(t)$ (see
(26)-(28)). Namely, we use the following

\begin{lemma}
Suppose that all conditions of Theorem 2 hold. Let $\lambda_{n,j}(t)$ be the
eigenvalue of $L_{t}(q)$ satisfying (73), and $\Psi_{n,j,t}(x)$ be the
corresponding eigenfunction. Then the relations
\begin{equation}
v_{n,1}(t)\sim1,\text{ }u_{n,2}(t)\sim1
\end{equation}
hold uniformly for $t\in\lbrack0,\rho].$
\end{lemma}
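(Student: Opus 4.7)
The plan is to read off $u_{n,j}(t)$ and $v_{n,j}(t)$ directly from the pair of equations (37), (38), using the eigenvalue characterization (73) to simplify the coefficients, and then to combine the resulting ratios with the normalization (28). Writing $w:=4\pi nt+C(\lambda_{n,j}(t),t)$ and $r:=\sqrt{D(\lambda_{n,j}(t),t)}$, substituting (73) into (37) and (38) gives, for $j=1$,
\[
-(w+r)\,u_{n,1}(t)=(q_{2n}+B(\lambda_{n,1}(t),t))\,v_{n,1}(t),\quad (w-r)\,v_{n,1}(t)=(q_{-2n}+B^{\prime}(\lambda_{n,1}(t),t))\,u_{n,1}(t),
\]
and for $j=2$ the analogous relations with $w+r$ and $w-r$ interchanged. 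Hence
\[
\Big|\tfrac{u_{n,1}(t)}{v_{n,1}(t)}\Big|=\frac{|q_{2n}+B|}{|w+r|},\qquad \Big|\tfrac{v_{n,2}(t)}{u_{n,2}(t)}\Big|=\frac{|q_{-2n}+B^{\prime}|}{|w+r|}.
\]

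The next step is to bound the right-hand sides uniformly for $t\in[0,\rho]$. The numerators are controlled via (46), (53) and (5): since $B,B^{\prime}=o(n^{-s-1})$ while $|q_{\pm 2n}|\geq cn^{-s-1}$ and $q_{-2n}\sim q_{2n}$, we get $|q_{2n}+B|\sim|q_{-2n}+B^{\prime}|\sim|q_{2n}|$. For the denominator $|w+r|$, I would use three ingredients established in the preceding lemmas: (i) $|C(\lambda_{n,j}(t),t)|\leq tKn^{-1}$ from (57) yields $\operatorname{Re}w\geq 2\pi nt$ for $n>N$; (ii) formulas (65)–(66) combined with (69) give $|r|^2=|D|\gtrsim |q_{2n}|^{2}+(nt)^{2}$, hence $|r|\gtrsim|q_{2n}|+nt$; (iii) $\operatorname{Re}r\geq c_{8}|r|$, which I extract from $|\arg D|\leq\pi-\alpha$ with $\alpha>0$ uniform in $n,t$, coming from (66) and the hypotheses (62) or (63). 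Putting these together, $|w+r|\geq\operatorname{Re}w+\operatorname{Re}r\gtrsim |q_{2n}|+nt$.

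Combining numerator and denominator estimates, $|u_{n,1}(t)/v_{n,1}(t)|$ and $|v_{n,2}(t)/u_{n,2}(t)|$ are bounded above by a constant independent of $n>N$ and $t\in[0,\rho]$. Plugging this into (28), which asserts $|u_{n,j}(t)|^{2}+|v_{n,j}(t)|^{2}=1+O(n^{-2})$, forces $|v_{n,1}(t)|$ and $|u_{n,2}(t)|$ to stay bounded above \emph{and} below by positive constants, which is exactly (80). The main obstacle is the lower bound on $|w+r|$: when $nt$ is small the term $w$ is small, so the estimate must come from $r$; but $r$ is complex, so one needs a genuinely angular bound $\operatorname{Re}r\gtrsim|r|$ rather than a naive triangle-inequality bound. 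This is precisely the role of condition (62) or (63) in preventing $\arg D$ from approaching $\pi$, which is the qualitative content of (79) sharpened into the quantitative inequality needed here.
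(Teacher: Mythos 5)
Your proposal is correct, but it organizes the argument differently from the paper. The paper splits $[0,\rho]$ into two regimes. For $nt\le|q_{2n}|$ it cross-multiplies (37) and (38) to get the identity (81), bounds the left-hand coefficient by $9\pi|q_{2n}|$ via (57), and invokes the dichotomy from (28) (either $u_{n,j}\sim v_{n,j}$ or $u_{n,j}v_{n,j}=o(1)$) to conclude $u_{n,j}\sim v_{n,j}\sim1$ for both $j$; for $nt>|q_{2n}|$ it uses exactly your relations (84)--(85) together with only the \emph{qualitative} fact $\operatorname{Re}\sqrt{D}>0$, since in that regime the term $4\pi nt$ alone already dominates $|q_{2n}|$. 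You instead avoid the case split entirely by proving a single uniform lower bound $|4\pi nt\pm C+\sqrt{D}|\gtrsim nt+|q_{2n}|$, and the price is that you must upgrade (79) to the quantitative estimate $\operatorname{Re}\sqrt{D}\ge c\,|\sqrt{D}|$. That upgrade is legitimate: the paper's own proof of (69) shows under (62) or (63) that $|\arg(q_{2n}q_{-2n}+(4\pi nt)^{2})|\le\pi-\alpha$ with $\alpha$ uniform in $n$ and $t$ (under (63) one gets $|\operatorname{Im}|\ge\tfrac{\varepsilon}{3}|\cdot|$ when $(4\pi nt)^2\le2|q_{2n}q_{-2n}|$, and positivity of the real part otherwise), and (66) transfers this to $D$ itself, giving $\operatorname{Re}\sqrt{D}\ge\sin(\alpha/2)|\sqrt{D}|$; combined with (65) and $q_{2n}\sim q_{-2n}$ this yields $\operatorname{Re}\sqrt{D}\gtrsim|q_{2n}|+nt$, which is what your denominator bound needs. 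What your route buys is a cleaner, uniform treatment with no case distinction and no need for identity (81); what the paper's route buys is that in the first regime it gets the stronger conclusion $u_{n,j}\sim v_{n,j}\sim1$ for both indices using only the soft statement (79). Minor remark: with the paper's convention $C=\tfrac12(A-A')$ the coefficient in (84) carries $\mp C$ rather than your $\pm C$, but since $|C|\le tKn^{-1}$ this sign is immaterial to every estimate.
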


\begin{proof}
Multiplying (37) and (38 ) by $v_{n,j}(t)$ and by $u_{n,j}(t)$ respectively
and then subtracting each other, we get
\begin{equation}
(-8\pi nt+A^{^{\prime}}(t)-A(t))u_{n,j}(t)v_{n,j}(t)=(q_{2n}+B(t))v_{n,j}%
^{2}(t)-(q_{-2n}+B^{^{\prime}}(t))u_{n,j}^{2}(t),
\end{equation}
where, for brevity, $A(\lambda_{n,j}(t),t),A^{^{\prime}}(\lambda
_{n,j}(t),t),B(\lambda_{n,j}(t),t)$ and $B^{^{\prime}}(\lambda_{n,j}(t),t)$
are denoted by $A(t),A^{^{\prime}}(t),B(t)$ and $B^{^{\prime}}(t)$ respectively.

First, suppose that $nt\leq\left\vert q_{2n}\right\vert .$ Then it follows
from (57) that $A^{^{\prime}}(t)-A(t)=o(q_{2n})$ and%
\begin{equation}
\mid-8\pi nt+A^{^{\prime}}(t)-A(t)\mid<9\pi\left\vert q_{2n}\right\vert .
\end{equation}
On the other hand, relations (5), (46) and (53) imply that
\begin{equation}
q_{2n}+B(\lambda_{n,j}(t),t)\sim q_{-2n}+B^{^{\prime}}(\lambda_{n,j}(t),t)\sim
q_{2n}.
\end{equation}
Therefore, using (81)-(83) and taking into account that if the relation
$u_{n,j}(t)\sim v_{n,j}(t)$ does not hold then $u_{n,j}(t)v_{n,j}(t)=o(1)$
(see (28)), we obtain $u_{n,j}(t)\sim v_{n,j}(t)\sim1$ \ for $j=1,2$. Thus,
(80) holds for the case $nt\leq\left\vert q_{2n}\right\vert .$

Now consider the case $nt>\left\vert q_{2n}\right\vert .$ Using (73) in (37)
and (38), we obtain
\begin{equation}
(C(t)-4\pi nt+(-1)^{j}\sqrt{D(t)})u_{n,j}(t)=(q_{2n}+B(t))v_{n,j}(t),
\end{equation}%
\begin{equation}
(-C(t)+4\pi nt+(-1)^{j}\sqrt{D(t)})v_{n,j}(t)=(q_{-2n}+B^{^{\prime}%
}(t))u_{n,j}(t).
\end{equation}
Since $\operatorname{Re}(\sqrt{D(\lambda_{n,j}(t),t)})>0,$ it follows from
(84) for $j=1$ and (57) that
\[
\mid C(\lambda_{n,1}(t),t)-4\pi nt-\sqrt{D(\lambda_{n,1}(t),t)}\mid
\geq\operatorname{Re}(4\pi nt(1+O(n^{-2}))+\sqrt{D(\lambda_{n,1}%
(t),t)})>\left\vert q_{2n}\right\vert .
\]
Using this and (83) in (84) for $j=1$ we get $v_{n,1}(t)\sim1.$ In the same
way we obtain the second relation of (80) from (85) for $j=2.$
\end{proof}

To obtain the asymptotic formulas of arbitrary accuracy we define successively
the following functions%
\begin{align*}
F_{n,j,1}(t)  &  =(2\pi n+t)^{2}-4\pi nt+(-1)^{j}\sqrt{(4\pi nt)^{2}%
+q_{2n}q_{-2n}},\\
F_{n,j,m+1}(t)  &  =(2\pi n+t)^{2}+\frac{1}{2}(A(F_{n,j,m},t)+A^{^{\prime}%
}(F_{n,j,m},t))-4\pi nt+(-1)^{j}\sqrt{D(F_{n,j,m},t)}%
\end{align*}
for $m=1,2,....$ Moreover we use the functions $A^{\ast}$ , $B^{\ast}$ which
are obtained from $A,$ $B$ respectively by replacing $q_{n_{1}}$ with
$e^{i(2\pi(n-n_{1})+t)x}.$

\begin{theorem}
\textit{ }$(a)$ \textit{If the conditions of Theorem 2 hold, then }the
eigenvalue $\lambda_{n,j}(t)$ satisfies the following , uniform with respect
to $t\in\lbrack0,\rho],$ formulas
\begin{equation}
\lambda_{n,j}(t)=(2\pi n+t)^{2}-4\pi nt+(-1)^{j}\sqrt{(4\pi nt)^{2}%
+q_{2n}q_{-2n}}+O(\frac{1}{n}),
\end{equation}%
\begin{equation}
\lambda_{n,j}(t)=F_{n,j,m}(t)+O(\frac{1}{n^{m}}),\text{ }m=1,2,....
\end{equation}

$(b)$ The normalized eigenfunction $\Psi_{n,j,t}(x)$ corresponding to
$\lambda_{n,j}(t)$ \ is $\frac{\varphi_{n,j,t}(x)}{\parallel\varphi
_{n,j,t}(x)\parallel},$ where $\varphi_{n,j,t}(x)$ satisfies the following,
uniform with respect to $t\in\lbrack0,\rho],$ formulas
\[
\varphi_{n,1,t}(x)=e^{i(-2\pi n+t)x}+\alpha_{n,1}e^{i(2\pi n+t)x}+A^{\ast
}(F_{n,1,m},t)+\alpha_{n,1}B^{\ast}(F_{n,1,m},t)+O(n^{-m-1}),
\]%
\[
\varphi_{n,2,t}(x)=e^{i(2\pi n+t)x}+\alpha_{n,2}e^{i(-2\pi n+t)x}+A^{\ast
}(F_{n,2,m},t)+\alpha_{n,2}B^{\ast}(F_{n,2,m},t)+O(n^{-m-1}),
\]%
\[
\alpha_{n,1}(t)=\frac{C(F_{n,1,m},t)-4\pi nt-\sqrt{D(F_{n,1,m},t)}}%
{q_{-2n}+B^{^{\prime}}(F_{n,1,m},t)}+O(\frac{1}{q_{2n}n^{m+1}})=O(1),
\]%
\[
\alpha_{n,2}(t)=\frac{-C(F_{n,2,m},t)-4\pi nt+\sqrt{D(F_{n,2,m},t)}}%
{q_{2n}+B(F_{n,2,m},t)}+O(\frac{1}{q_{2n}n^{m+1}})=O(1).
\]

\end{theorem}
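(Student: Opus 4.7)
The plan is to use the exact formula (73) from Theorem~2 as the starting point, peel off error terms to obtain (86), then iterate via the recursive definition of $F_{n,j,m}$ to obtain (87), and finally derive the eigenfunction formulas of part~$(b)$ by combining (87) with the linear relations (37)--(38). For (86), Lemma~2$(a)$ already gives $\tfrac{1}{2}(A+A')=O(1/n)$ uniformly in $t$, and the main estimate is
\[
\sqrt{D(\lambda_{n,j}(t),t)}-\sqrt{(4\pi nt)^{2}+q_{2n}q_{-2n}}
=\frac{D_{1}(\lambda_{n,j}(t),t)+D_{2}(\lambda_{n,j}(t),t)}{\sqrt{D(\lambda_{n,j}(t),t)}+\sqrt{(4\pi nt)^{2}+q_{2n}q_{-2n}}}.
\]
I would use (44) and (57) to bound $D_{1}=O(t^{2})$, Lemma~2$(b)$ together with (49) to bound $D_{2}=o(n^{-2s-1})$, and the lower bound $\gtrsim n^{-s-1}+nt$ for the denominator (read off (72) and an analogue of (69) for the second square root). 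A short case split according to whether $nt\geq n^{-s-1}$ then forces the ratio to be $O(1/n)$ uniformly in $t\in[0,\rho]$, which is (86).

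For (87) I would treat (73) as the fixed-point equation $\lambda_{n,j}(t)=G(\lambda_{n,j}(t),t)$ with
\[
G(\lambda,t)=(2\pi n+t)^{2}+\tfrac{1}{2}(A(\lambda,t)+A'(\lambda,t))-4\pi nt+(-1)^{j}\sqrt{D(\lambda,t)},
\]
so that by construction $F_{n,j,m+1}(t)=G(F_{n,j,m}(t),t)$. Extending the Lipschitz estimates of Lemmas~3 and~4 from the two eigenvalues to arbitrary $\lambda,\mu\in D(n,t,\rho)$ yields $G(\cdot,t)$ Lipschitz in $\lambda$ with constant $O(1/n)$ uniformly in $t$. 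Since (86) rewrites as $\lambda_{n,j}(t)-F_{n,j,1}(t)=O(1/n)$, induction on $m$ through
\[
\lambda_{n,j}(t)-F_{n,j,m+1}(t)=G(\lambda_{n,j}(t),t)-G(F_{n,j,m}(t),t)
\]
multiplies the error by $O(1/n)$ at each step and delivers (87).

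For part~$(b)$ I would start from the expansion (26) of $\Psi_{n,j,t}$, which already exhibits $u_{n,j}(t)e^{i(2\pi n+t)x}+v_{n,j}(t)e^{i(-2\pi n+t)x}$ as the leading terms. For $j=1$, Lemma~5 gives $v_{n,1}(t)\sim1$, so $\varphi_{n,1,t}:=v_{n,1}(t)^{-1}\Psi_{n,1,t}$ has $e^{i(-2\pi n+t)x}$ as its leading term and $\alpha_{n,1}(t)=u_{n,1}(t)/v_{n,1}(t)$ as the coefficient of $e^{i(2\pi n+t)x}$; reading this ratio off (38) at $\lambda=\lambda_{n,1}(t)$ and then substituting $F_{n,1,m}(t)$ for $\lambda_{n,1}(t)$ using (87) and the Lipschitz bounds of Lemmas~3 and~4 produces the claimed expression for $\alpha_{n,1}$, with the $q_{2n}$ in the denominator of the error coming from the lower bound (53) on $|q_{-2n}+B'|$. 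To recover the contributions of $A^{*}$ and $B^{*}$ to the remainder, I would iterate (23)--(24) in exactly the way that (30) was iterated to produce (32), but now retaining the symbolic coefficients $u_{n,1},v_{n,1}$ of the two leading exponentials rather than absorbing them into the remainder; after $m+1$ iterations the resulting series sum to $A^{*}(\lambda_{n,1}(t),t)+\alpha_{n,1}B^{*}(\lambda_{n,1}(t),t)$, and a final use of (87) replaces $\lambda_{n,1}(t)$ by $F_{n,1,m}(t)$ at cost $O(n^{-m-1})$. The case $j=2$ is symmetric: normalize by $u_{n,2}(t)$ (which is $\sim1$ by Lemma~5) and read $\alpha_{n,2}(t)$ off (37).

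The step I expect to be the main obstacle is the promotion of Lemma~4 from an estimate at the pair of eigenvalues to a genuine Lipschitz bound for $\sqrt{D(\cdot,t)}$ on the whole disc $D(n,t,\rho)$ uniformly in $t\in[0,\rho]$, because the denominator $|\sqrt{D(\lambda)}+\sqrt{D(\mu)}|$ can become small precisely when $D_{1}+D_{2}$ does, and a careful case split on the competing scales $nt$ and $|q_{2n}q_{-2n}|^{1/2}$ is required to keep the ratio under control. This estimate underpins both the contraction argument that iterates (86) into (87) and the substitutions $\lambda_{n,j}(t)\mapsto F_{n,j,m}(t)$ driving the eigenfunction formulas of part~$(b)$.
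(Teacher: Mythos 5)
Your proposal reproduces the paper's proof essentially step for step: (86) via the difference-quotient identity for the two square roots together with the bounds (67) and (72); (87) by induction, using exactly the Lipschitz/perturbation property of $A,A^{^{\prime}},B,B^{^{\prime}},\sqrt{D}$ near $F_{n,j,m}$ that the paper records as equality (91); and part $(b)$ by iterating the Fourier decomposition to reach $\Psi_{n,j,t}=u_{n,j}e^{i(2\pi n+t)x}+v_{n,j}e^{i(-2\pi n+t)x}+u_{n,j}A^{\ast}+v_{n,j}B^{\ast}$ and dividing by the dominant coefficient, with the ratio $v_{n,j}/u_{n,j}$ (resp. its reciprocal) read off (37)--(38) and then transported to $F_{n,j,m}$ by (87) and (91). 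The one point to repair is your treatment of the $D_{2}$-contribution in (86): pairing $D_{2}=o(n^{-2s-1})$ with the weakened denominator bound $n^{-s-1}+nt$ yields only $o(n^{-s})$, which is not $O(1/n)$ when $s=0$; the paper instead keeps the lower bound $\left\vert\sqrt{D}+\sqrt{(4\pi nt)^{2}+q_{2n}q_{-2n}}\right\vert\geq\sqrt{\varepsilon/6}\,(4\pi nt+\vert\sqrt{q_{2n}q_{-2n}}\vert)$ coming from (66) and (69) and divides $D_{2}=o(q_{2n}n^{-s-1})$ by $\vert\sqrt{q_{2n}q_{-2n}}\vert\sim\vert q_{2n}\vert$ to get $o(n^{-s-1})$ directly (inequalities (89)--(90)).
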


\begin{proof}
By (73) and (45) to prove (86) it is enough to show that%
\begin{equation}
\sqrt{D(\lambda_{n,j}(t),t)}=\sqrt{(4\pi nt)^{2}+q_{2n}q_{-2n}}+O(\frac{1}%
{n}).
\end{equation}
Using (66) and (69) one can easily verify that%
\[
\mid\sqrt{D(\lambda_{n,j}(t),t)}+\sqrt{(4\pi nt)^{2}+q_{2n}q_{-2n}}\mid
\geq\sqrt{\frac{\varepsilon}{6}}(4\pi nt+\mid\sqrt{q_{2n}q_{-2n}}\mid.
\]
Therefore there exists a constant $c_{4}$ such that
\[
\mid\sqrt{D(\lambda_{n,j}(t),t)}-\sqrt{(4\pi nt)^{2}+q_{2n}q_{-2n}}\mid
=\mid\frac{D_{1}(\lambda_{n,j}(t),t)+D_{2}(\lambda_{n,j}(t),t)}{\sqrt
{D(\lambda_{n,j}(t),t)}+\sqrt{(4\pi nt)^{2}+q_{2n}q_{-2n}}}\mid\leq
\]%
\begin{equation}
c_{4}(\mid\frac{D_{1}(\lambda_{n,j}(t),t)}{4\pi nt}\mid+\mid\frac
{D_{2}(\lambda_{n,j}(t),t)}{\sqrt{q_{2n}q_{-2n}}}\mid).
\end{equation}
Moreover, from (67) and (5) we obtain%
\begin{equation}
\frac{D_{1}(\lambda,t)}{4\pi nt}=O(\frac{1}{n}),\text{ }\frac{D_{2}%
(\lambda,t)}{\sqrt{q_{2n}q_{-2n}}}=o(n^{-s-1}).
\end{equation}
Hence, (88) follows from (89) and (90). \ Thus (86) is proved.

It follows from Lemma 3 and from the proof of (64) that the functions
$A(\lambda,t),$ $A^{^{\prime}}(\lambda,t),$ $B(\lambda,t),$ $B^{^{\prime}%
}(\lambda,t)$ and $\sqrt{D(\lambda,t)}$ satisfy the equality
\begin{equation}
f(F_{n,j,k}(t)+O(n^{-k}),t)=f(F_{n,j,k}(t),t)+O\left(  n^{-k-1}\right)  .
\end{equation}
Now, we prove (87) by induction. It is proved for $m=1$ (see (86) and the
definition of $F_{n,j,1}(t)$ ). Assume that (87) is true for $m=k$.
Substituting the value of $\lambda_{n,j}(t)$ given by (87) for $m=k$ in the
right-hand side of (73) and using (91) we get (87) for $m=k+1.$

$(b)$ Writing the decomposition of the normalized eigenfunction $\Psi
_{n,j,t}(x)$ corresponding to the eigenvalue $\lambda_{n,j}(t)$ by the basis
$\ \{e^{i(2\pi(n-n_{1})+t)x}:n_{1}\in\mathbb{Z}\},$ we obtain
\begin{equation}
\Psi_{n,j,t}(x)-u_{n,j}(t)e^{i(2\pi n+t)x}-v_{n,j}(t)e^{i(-2\pi n+t)x}=
\end{equation}%
\[
\sum_{n_{1}\neq0,2\pi;n_{1}=-\infty}^{\infty}(\Psi_{n,j,t}(x),e^{i(2\pi
(n-n_{1})+t)x})e^{i(2\pi(n-n_{1})+t)x}.
\]
The right-hand side of (92) can be obtained from the right-hand side of (30)
by replacing $q_{n_{1}}$ with $e^{i2\pi(n-n_{1})x}.$ Since (37) is obtained
from (30) by iteration, doing the same, we obtain
\begin{equation}
\Psi_{n,j,t}(x)=u_{n,j}(t)e^{i(2\pi n+t)x}+v_{n,j}(t)e^{i(-2\pi n+t)x}%
+u_{n,j}(t)A^{\ast}(\lambda_{n,j},t)+v_{n,j}(t)B^{\ast}(\lambda_{n,j},t)
\end{equation}
from (92). First, let us consider the case $j=2.$ Using (87) and (91) in (37),
taking into account (46) and (53), we get%
\begin{equation}
\frac{v_{n,2}(t)}{u_{n,2}(t)}=\frac{-C(F_{n,2,m}(t),t)-4\pi nt+\sqrt
{D(F_{n,2,m}(t),t)}}{q_{2n}+B(F_{n,2,m}(t),t)}+O(\frac{1}{q_{2n}n^{m+1}}),
\end{equation}
where $m>s.$ Now, dividing both sides of (93) by $u_{n,2}(t),$ and denoting
\[
\alpha_{n,2}(t)=\frac{v_{n,2}(t)}{u_{n,2}(t)},\text{ }\varphi_{n,2,t}%
(x)=\frac{\Psi_{n,2,t}(x)}{u_{n,2}(t)},
\]
we obtain
\begin{equation}
\varphi_{n,2,t}(x)=e^{i(2\pi n+t)x}+\alpha_{n,2}(t)e^{i(-2\pi n+t)x}+A^{\ast
}(\lambda_{n,2}(t),t)+\alpha_{n,2}(t)B^{\ast}(\lambda_{n,2}(t),t).
\end{equation}
Here $\alpha_{n,2}(t)=O(1)$ due to (80). On the other hand, one can readily
see that the functions $A^{\ast}(\lambda,t)$ and $B^{\ast}(\lambda,t)$ also
satisfy (91). Therefore, from (95) we obtain the proof of $(b)$ for $j=2$. In
the same way, we get the proof of $(b)$ for $j=1$.
\end{proof}

To obtain the asymptotic formulas for the eigenvalues $\lambda_{n,j}(t)$ for
$t\in\lbrack\pi-\rho,\pi]$ instead of (30) we use the formula%
\begin{equation}
(\lambda_{n,j}(t)-(2\pi n+t)^{2})(\Psi_{n,j,t},e^{i(2\pi n+t)x})-q_{2n+1}%
(\Psi_{n,j,t},e^{i(-2\pi(n+1)+t)x})=
\end{equation}%
\[
\sum_{n_{1}\neq0,2n+1;n_{1}=-\infty}^{\infty}q_{n_{1}}(\Psi_{n,j,t}%
,e^{i(2\pi(n-n_{1})+t)x}).
\]
From (30) we obtained (37), (38). In the same way, from (96) we get
\[
(\lambda_{n,j}(t)-(2\pi n+t)^{2}-\widetilde{A}(\lambda_{n,j}(t),t))u_{n,j}%
(t)=(q_{2n+1}+\widetilde{B}(\lambda_{n,j}(t),t))v_{n,j}(t),
\]%
\[
(\lambda_{n,j}(t)-(-2\pi(n+1)+t)^{2}-\widetilde{A}^{^{\prime}}(\lambda
_{n,j}(t),t))v_{n,j}(t)=(q_{-2n-1}+\widetilde{B}^{^{\prime}}(\lambda
_{n,j}(t),t))u_{n,j}(t),
\]
where
\[
\widetilde{A}(\lambda,t)=\sum_{k=1}^{\infty}\widetilde{a}_{k}(\lambda
,t),\text{ }\widetilde{B}=\sum_{k=1}^{\infty}\widetilde{b}_{k},\text{
}\widetilde{A}^{^{\prime}}=\sum_{k=1}^{\infty}\widetilde{a}_{k}^{\prime
},\text{ }\widetilde{B}^{^{\prime}}=\sum_{k=1}^{\infty}\widetilde{b}%
_{k}^{^{\prime}}.
\]
Here $\widetilde{a}_{k},\widetilde{a}_{k}^{\prime},\widetilde{b}%
_{k},\widetilde{b}_{k}^{\prime}$ differ from $a_{k},a_{k}^{\prime},b_{k}%
,b_{k}^{\prime}$ respectively, in the following sense. The sums in
$\widetilde{a}_{k},\widetilde{a}_{k}^{\prime},\widetilde{b}_{k},\widetilde
{b}_{k}^{\prime}$ are taken under conditions $n_{1}+n_{2}+...+n_{s}\neq
0,\pm(2n+1)$ instead of the condition $n_{1}+n_{2}+...+n_{s}\neq0,\pm2n$ for
$s=1,2,...,k.$ Besides in $\widetilde{b}_{k},\widetilde{b}_{k}^{\prime}$ the
multiplicand $q_{\pm2n-n_{1}-n_{2}-...-n_{k}}$ of $b_{k},b_{k}^{\prime}$ is
replaced by $q_{\pm(2n+1)-n_{1}-n_{2}-...-n_{k}}$. Moreover, instead of
$F,\alpha_{n,j},$ $A^{\ast},$ $B^{\ast}$ we use $\widetilde{F},\widetilde
{\alpha}_{n,j},$ $\widetilde{A}^{\ast},$ $\widetilde{B}^{\ast}$ that can be
defined in a similar way. Thus, instead of (5), (53), (62) and (63) using the
relations
\begin{equation}
q_{2n+1}\sim q_{-2n-1},\text{ }\mid q_{2n+1}\mid>cn^{-s-1},\text{ }%
\end{equation}%
\begin{equation}
\operatorname{Re}q_{2n+1}q_{-2n-1}\geq0,
\end{equation}%
\begin{equation}
\mid\operatorname{Im}q_{2n+1}q_{-2n-1}\mid\geq\varepsilon\mid q_{2n+1}%
q_{-2n-1}\mid
\end{equation}
respectively and repeating the proof of Theorems 1-3 we get:

\begin{theorem}
\textit{Let }$q\in W_{1}^{p}[0,1]$\textit{ and (3) holds with some }$s\leq p.$
Suppose (97) and at least one of the inequalities (98)and (99) hold.
Then\textit{ there exists }$N(\pi,\rho)$\textit{ such that the} eigenvalues
$\lambda_{n,j}(t)$ for $n>N(\pi,\rho)$ and $t\in\lbrack\pi-\rho,\pi]$ are
simple and satisfy the formulas
\begin{equation}
\lambda_{n,j}(t)=(2\pi n+t)^{2}-2\pi(2n+1)(t-\pi)+(-1)^{j}\sqrt{(2\pi
(2n+1)(t-\pi))^{2}+q_{2n+1}q_{-2n-1}}+O(\frac{1}{n}),
\end{equation}%
\begin{equation}
\lambda_{n,j}(t)=\widetilde{F}_{n,j,m}(t)+O(n^{-m}),\text{ }m=1,2,....
\end{equation}
The normalized eigenfunction $\Psi_{n,j,t}(x)$ corresponding to $\lambda
_{n,j}(t)$ \ is $\frac{\varphi_{n,j,t}(x)}{\parallel\varphi_{n,j,t}%
(x)\parallel},$ where $\varphi_{n,j,t}(x)$ satisfies the following , uniform
with respect to $t\in\lbrack\pi-\rho,\pi],$ formulas
\[
\varphi_{n,1,t}=e^{i(-2\pi(n+1)+t)x}+\widetilde{\alpha}_{n,1}(t)e^{i(2\pi
n+t)x}+\widetilde{A}^{\ast}(\widetilde{F}_{n,1,m},t)+\widetilde{\alpha}%
_{n,1}(t)\widetilde{B}^{\ast}(\widetilde{F}_{n,1,m},t)+O(n^{-m-1}),
\]%
\[
\varphi_{n,2,t}=e^{i(2\pi n+t)x}+\widetilde{\alpha}_{n,2}(t)e^{i(-2\pi
(n+1)+t)x}+\widetilde{A}^{\ast}(\widetilde{F}_{n,2,m},t)+\alpha_{n,2}%
(t)\widetilde{B}^{\ast}(\widetilde{F}_{n,2,m},t)+O(n^{-m-1}).
\]

\end{theorem}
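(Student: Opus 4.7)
\bigskip

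\noindent\textbf{Proof proposal.} The plan is to transport the three-step program that produced Theorem 3 from a neighborhood of $t=0$ to a neighborhood of $t=\pi$, replacing the near-resonant pair of quasi-momenta $(2\pi n+t,\,-2\pi n+t)$ (whose squares coincide at $t=0$) by $(2\pi n+t,\,-2\pi(n+1)+t)$ (whose squares coincide at $t=\pi$). In this shifted picture the relevant off-diagonal Fourier coefficients become $q_{\pm(2n+1)}$ rather than $q_{\pm 2n}$, and the deviation parameter $t-\pi$ plays the role that $t$ had in Section~2: indeed $(2\pi n+t)-(-2\pi(n+1)+t)=2\pi(2n+1)$ while $(2\pi n+t)^{2}-(-2\pi(n+1)+t)^{2}=-4\pi(2n+1)(t-\pi)$, which explains the form of (100).

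First I would establish the analog of Remark 1 by applying Rouch\'{e}'s theorem to $F(\xi)-2\cos t$ on the circle $\widetilde{C}(n,t,\rho)=\{\xi:|\xi-(2\pi n+t)|=3\rho\}$, using (13) and the Taylor expansion of $\cos\xi$ at $2\pi n+t$ centered on $t=\pi$. This yields two continuous eigenvalues $\lambda_{n,1}(t),\lambda_{n,2}(t)$ in a disk $\widetilde{D}(n,t,\rho)$ of radius $O(n)$ around $(2\pi n+t)^{2}$, for $n>\widetilde{N}$ and $t\in[\pi-\rho,\pi]$. Formula (96) is obtained by multiplying the eigenvalue equation by $e^{-i(2\pi n+t)x}$ and isolating the term with weight $q_{2n+1}$, which is now the \emph{large} off-diagonal coupling. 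Iterating (96) through the same bookkeeping that led from (30) to (32), (35), and running the symmetric computation starting from $(\Psi_{n,j,t},e^{i(-2\pi(n+1)+t)x})$, produces the two-equation system in $\widetilde{A},\widetilde{B},\widetilde{A}',\widetilde{B}'$ with the forbidden-index condition $n_{1}+\cdots+n_{s}\neq 0,\pm(2n+1)$ and with $q_{\pm(2n+1)-n_{1}-\cdots-n_{k}}$ appearing in $\widetilde{b}_{k},\widetilde{b}_{k}'$.

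Next I would translate Lemmas 2--4 verbatim. Since $2n+1\sim 2n$, the sums $\sum_{k\neq 0,\pm(2n+1)}|k(2n+1-k)|^{-1}=O(n^{-1}\ln n)$ behave as in (29), so the bounds (34), (36), (45), (46), and the Lipschitz estimates (56)--(58) transfer without loss. The discriminant becomes
\[
\widetilde{D}(\lambda,t)=(2\pi(2n+1)(t-\pi))^{2}+q_{2n+1}q_{-2n-1}+\widetilde{D}_{1}(\lambda,t)+\widetilde{D}_{2}(\lambda,t),
\]
and under (97)--(99) the lower bound $|\widetilde{D}(\lambda_{n,j}(t),t)|\gtrsim|q_{2n+1}q_{-2n-1}|+(2\pi(2n+1)(t-\pi))^{2}$ goes through exactly as in Lemma~4, since (98)/(99) are the analogs of (62)/(63) with the shifted index.

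Finally I would re-run the simplicity argument of Theorem 1--2 and the iteration-for-accuracy argument of Theorem 3 in these shifted coordinates. The main obstacle is the continuity/connectedness argument that rules out the degenerate branch $\lambda_{n,1}(t)=\lambda_{n,2}(t)$: it requires an endpoint of $[\pi-\rho,\pi]$ where simplicity is already known. This is supplied by Result~1 at $t=\pi-\rho$ (no analog of the $t=0$ Result 2 is needed here, which is why the theorem only invokes (97)--(99) rather than an additional hypothesis at the endpoint $t=\pi$). With that input, the image of the continuous function $d_{n}(t)$ on $[\pi-\rho,\pi]$ lies in the connected component $(c(n^{-s-1}+n|t-\pi|),\infty)$, yielding simplicity and the asymptotic formulas (100), (101). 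The eigenfunction formulas then follow by dividing the decomposition analog of (93) by $u_{n,2}(t)$ or $v_{n,1}(t)$ — which are $\sim 1$ by the analog of Lemma~5 — and substituting the $m$-th iterate $\widetilde{F}_{n,j,m}$ via (91).
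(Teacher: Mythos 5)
Your proposal follows essentially the same route as the paper, which itself proves Theorem 4 only by indicating that one starts from (96) in place of (30), replaces the resonant pair by $(2\pi n+t,\,-2\pi(n+1)+t)$, the coefficients $q_{\pm 2n}$ by $q_{\pm(2n+1)}$, the hypotheses (5), (53), (62), (63) by (97)--(99), and repeats the proofs of Theorems 1--3; your identification of $t-\pi$ as the deviation parameter and of $t=\pi-\rho$ (via Result 1) as the endpoint anchoring the connectedness argument is exactly what the paper's scheme requires. The only blemish is the sign in your motivational identity, which should read $(2\pi n+t)^{2}-(-2\pi(n+1)+t)^{2}=4\pi(2n+1)(t-\pi)$; this does not affect the argument or the resulting form of (100).
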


The following remark follows from Remark 1 and Theorems 1-4.

\begin{remark}
Suppose the conditions of Theorem 2 and Theorem 4 hold. One can readily see
that (86) for $t=0$ and $t=\rho$ gives formulas (10) and (9) respectively, if
we use the notation: $\lambda_{-n}(t)=:\lambda_{n,1}(t)$ for $n=1,2,...$ and
$\,\lambda_{n}(t)=:\lambda_{n,2}(t)$ for $n=0,1,2,...$ Note that \textit{we}
use both notations $\lambda_{n}(t)$ and $\lambda_{n,j}(t)$. If the notation
$\lambda_{n}(t)$ is used, then the corresponding eigenfunction and Fourier
coefficients (see (26)) are denoted by $\Psi_{n,t}(x)$ and $u_{n}(t),$
$v_{n}(t).$ Similarly, (100) for $t=\pi$ and $t=\pi-\rho$ gives the formula
obtained in [1] for $\lambda_{n,j}(\pi)$ and (9). Moreover, there is
one-to-one correspondence between the eigenvalues (counting with
multiplicities) and integers. Indeed, by (9)\textbf{ }and\textbf{ }Theorems
2-4 if $|n|>\max\{N,N(\pi,\rho)\}$ and $t\in\lbrack0,\pi]$ then the
eigenvalues $\lambda_{n}(t)$ and $\lambda_{-n}(t)$ are simple and the number
of the remaining eigenvalues of $L_{t}(q)$\textit{ is} equal to $2\max
\{N,N(\pi,\rho)\}+1,$ where $N$ and $N(\pi,\rho)$ are defined in Remark 1 and
Theorem 4. Further, for simplicity of notation, $\max\{N,N(\pi,\rho)\}$ is
denoted by $N.$ Using the above notation, we see that \textit{the spectrum of
}$L_{t}(q)$\textit{ is }
\begin{equation}
S(L_{t})=\{\,\lambda_{n}(t):n\in\mathbb{Z}\}=\{\lambda_{n,1}%
(t):n=1,2,...\}\cup\{\,\lambda_{n,2}(t)\text{ }n=0,1,2,...\}.
\end{equation}
Since $\lambda_{n}(t)$ for $|n|>N$ is a simple root of
\begin{equation}
F(\lambda)=2\cos t,
\end{equation}
where $F(\lambda)$ is the Hill's discriminant, we have
\begin{equation}
F(\lambda_{n}(t))=2\cos t,\text{ }\frac{dF(\lambda_{n}(t))}{d\lambda}%
\neq0,\text{ }\frac{d\lambda_{n}(t)}{dt}=-(\frac{dF}{d\lambda})^{-1}2\sin t
\end{equation}
for $|n|>N,$ and $t\in\lbrack0,\pi].$ This implies that%
\begin{equation}
\Gamma_{n}=:\{\lambda_{n}(t):t\in\lbrack0,\pi]\}
\end{equation}
is a simple (i.e. $\lambda_{n}:[0,\pi]\rightarrow\Gamma_{n}$ is injective)
analytic arc with endpoints $\lambda_{n}(0)$ and $\lambda_{n}(\pi).$

The eigenvalues of $L_{-t}(q)$ coincides with the eigenvalues of $L_{t}(q),$
because they are roots of equation (103) and $\cos(-t)=\cos t.$ We define the
eigenvalue $\lambda_{n}(-t)$ of $L_{-t}(q)$ by $\lambda_{n}(-t)=\lambda
_{n}(t)$ for all $t\in(0,\pi).$ Then $\lambda_{n}(t)$ is an analytic function
on $(-\pi,\pi]$.
\end{remark}

Using Theorems 2-4 and taking into account Remark 2, we get

\begin{theorem}
\textit{Let }$q\in W_{1}^{p}[0,1]$\textit{ and (3) holds with some }$s\leq p.$
If $q_{n}\sim q_{-n},$ $\mid q_{n}\mid>cn^{-s-1}$ and at least one of the
following inequalities%
\[
\operatorname{Re}q_{n}q_{-n}\geq0,\text{ }\mid\operatorname{Im}q_{n}q_{-n}%
\mid\geq\varepsilon\mid q_{n}q_{-n}\mid
\]
hold for some $c>0$ and $\varepsilon>0$ and for $n>N,$ where $N$ is defined in
Remark 2, then the eigenvalues $\lambda_{n}(t)$ of $L_{t}(q)$ for $\left\vert
n\right\vert >N$ and $t\in\lbrack0,\pi]$ are simple. The eigenvalues
$\lambda_{n}(t)$ and the corresponding eigenfunctions $\Psi_{n,t}(x)$ satisfy
the formulas (9) and the formulas obtained in Theorems 3 and 4.
\end{theorem}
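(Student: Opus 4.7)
My plan is to piece together Theorems 2--4 and Result 1 to cover the full range $t \in [0,\pi]$. First I would verify that the uniform hypotheses of Theorem 5 restrict correctly to the more specialized hypotheses of the earlier results: the conditions $q_{n}\sim q_{-n}$ and $|q_{n}|>cn^{-s-1}$, when restricted to even indices $n=2m$, yield exactly (5) and (53), and when restricted to odd indices $n=2m+1$, yield (97); the dichotomy ``$\operatorname{Re} q_{n}q_{-n}\geq 0$ or $|\operatorname{Im} q_{n}q_{-n}|\geq \varepsilon|q_{n}q_{-n}|$'' similarly splits into (62)/(63) for even $n$ and (98)/(99) for odd $n$. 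With these specializations in hand, Theorems 2 and 3 apply on $[0,\rho]$, Theorem 4 applies on $[\pi-\rho,\pi]$, and Result 1 (from [28]) applies on $[\rho,\pi-\rho]$, giving simplicity of the large eigenvalues together with the claimed asymptotics on each subinterval.

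Second, I would reconcile the two indexing conventions. Theorems 2--4 produce a pair $\lambda_{n,1}(t),\lambda_{n,2}(t)$ for each sufficiently large $n$, whereas Result 1 indexes eigenvalues by a single integer. Remark 2 has already set up the convention $\lambda_{-n}(t):=\lambda_{n,1}(t)$ for $n\geq 1$ and $\lambda_{n}(t):=\lambda_{n,2}(t)$ for $n\geq 0$, and has checked that with $N:=\max\{N(0,\rho),N(\rho),N(\pi,\rho),N_{0}\}$ the total count of eigenvalues (counting multiplicity) matches $\mathbb{Z}$. Since for $|n|>N$ the eigenvalue $\lambda_{n}(t)$ is a simple root of $F(\lambda)=2\cos t$ with $dF/d\lambda\neq 0$, the implicit function theorem makes $\lambda_{n}(\cdot)$ analytic on $(0,\pi)$, so the pieces defined on $[0,\rho]$, $[\rho,\pi-\rho]$, and $[\pi-\rho,\pi]$ glue into a single branch, and the branch-by-branch simplicity from the three regimes assembles into simplicity on the whole of $[0,\pi]$.

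The proof contains no new estimate; the actual work is the bookkeeping in the gluing step. The principal subtlety I expect is ensuring that the branch $\lambda_{n,j}(t)$ built on $[0,\rho]$ by Theorem 3 and the branch $\lambda_{n}(t)$ built on $[\rho,\pi-\rho]$ by Result 1 genuinely refer to the same function at the transition point $t=\rho$, and likewise at $t=\pi-\rho$. This is immediate from simplicity of both $\lambda_{n,1}(\rho),\lambda_{n,2}(\rho)$ (Theorem 3) and $\lambda_{\pm n}(\rho)$ (Result 1) together with the matching asymptotics (86) and (9) at $t=\rho$, which forces the labelled eigenvalues to coincide. Once that identification is made, the statement of Theorem 5 is simply the union of the three regional statements, and the formulas (9) and those from Theorems 3 and 4 hold on their respective subintervals with the unified index $n\in\mathbb{Z}$.
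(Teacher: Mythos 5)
Your proposal is correct and follows essentially the same route as the paper, which proves Theorem 5 in one line by invoking Theorems 2--4 together with Remark 2; your specialization of the hypotheses by parity of the index and your gluing of the three regimes at $t=\rho$ and $t=\pi-\rho$ is exactly the bookkeeping the paper leaves implicit (and which Remarks 1 and 2 have already prepared via the continuity of $\lambda_{n,j}(t)$ and the indexing convention).
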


\section{Asymptotic Analysis of L(q)}

Since the spectrum $S(L(q))$ of the operator $L(q)$ is the union of the
spectra $S(L_{t}(q))$ of the operators $L_{t}(q)$ for $t\in\lbrack0,\pi],$ it
follows from (102) and (105) that
\[
S(L(q))=%
{\textstyle\bigcup\limits_{n\in\mathbb{Z}}}
\Gamma_{n}.
\]
By (104) and (105) the subset $\gamma=:\{\lambda_{n}(t):t\in\lbrack
\alpha,\beta]\},$ where $[\alpha,\beta]\subset\lbrack0,\pi],$ of $\Gamma_{n}$
for $|n|>N$ is a regular spectral arc of $L(q)$ in sense of [9] (see
Definition 2.4 of [9]). Following \ [24, 26, 9], we define the projection
$P(\gamma)$ and the spectral singularities as follows
\begin{equation}
P(\gamma)f=\frac{1}{2\pi}%
{\textstyle\int\limits_{\gamma}}
(\Phi_{+}(x,\lambda)F_{-}(\lambda,f)+\Phi_{-}(x,\lambda)F_{+}(\lambda
,f))\frac{\varphi(1,\lambda)}{p(\lambda)}d\lambda,
\end{equation}
where $p(\lambda)=\sqrt{4-F^{2}(\lambda)},$ $F(\lambda)$ is defined in (103),%
\[
\Phi_{\pm}(x,\lambda)=:\theta(x,\lambda)+(\varphi(1,\lambda))^{-1}(e^{\pm
it}-\theta(1,\lambda))\varphi(x,\lambda)
\]
is the Floquet solution and
\[
F_{\pm}(\lambda,f)=\int_{\mathbb{R}}f(x)\Phi_{\pm}(x,\lambda)dx.
\]
The spectral singularities of the operator $L(q)$ are the points of $S(L(q))$
in neighborhoods of which the projections $P(\gamma)$ of the operator $L(q)$
are not uniformly bounded. In other words, we use the following definition.

\begin{definition}
We say that $\lambda\in S(L(q))$ is a spectral singularity of $L(q)$ if for
all sufficiently small $\varepsilon>0$\ there exists a sequence $\{\gamma
_{n}\}$ of the regular spectral arcs $\gamma_{n}\subset\{z\in\mathbb{C}:\mid
z-\lambda\mid<\varepsilon\}$ such that%
\begin{equation}
\lim_{n\rightarrow\infty}\parallel P(\gamma_{n})\parallel=\infty.
\end{equation}

\end{definition}

In the similar way we define the spectral singularity at infinity.

\begin{definition}
We say that the operator $L(q)$ has a spectral singularity at infinity if
there exists a sequence $\{\gamma_{k}\}$ of the regular spectral arcs such
that $d(0,\gamma_{k})\rightarrow\infty$ as $k\rightarrow\infty$ and (107)
holds, where $d(0,\gamma_{k})$ is the distance from the point $(0,0)$ to the
arc $\gamma_{k}.$
\end{definition}

To estimate the projections we use the following lemma of [16]:

\textit{Lemma 5.12 of [16] Let }$A^{^{\prime}}$\textit{ be in }$L_{\infty
}((0,2\pi);B(L_{2}(0,1))).$\textit{ Then for }$f$\textit{ in }$L_{2}%
(-\infty,\infty)$\textit{ the limit in mean }%
\begin{equation}
Af=\lim_{N_{i}\rightarrow\infty}\frac{1}{2\pi}%
{\textstyle\sum\limits_{-N_{1}}^{N_{2}}}
{\textstyle\sum\limits_{-N_{3}}^{N_{4}}}
T_{j}^{\ast}%
{\textstyle\int\limits_{0}^{2\pi}}
e^{it(j-k)}A^{^{\prime}}(t)T_{k}fdt
\end{equation}
\textit{exists and defines a bounded operator in }$L_{2}(-\infty,\infty
)$\textit{ of norm }$\parallel A\parallel\leq\parallel A^{^{\prime}}%
\parallel_{\infty},$\textit{ where }$T_{k}$\textit{ is defined by }%
$T_{k}(f(x))=f(x+k)$\textit{ for }$x\in\lbrack0,1)$\textit{, }$T_{k}%
(f(x))=0$\textit{ for }$x\neq\lbrack0,1)$\textit{ and}

\textit{ }$T_{j}^{\ast}(f(x))=f(x-j)$\textit{ for }$x\in\lbrack j,j+1),$%
\textit{ }$T_{j}^{\ast}(f(x))=0$\textit{ for }$x\neq\lbrack j,j+1).$

Let $\{\chi_{n,t}:n\in\mathbb{Z\}}$ be biorthogonal to $\left\{  \Psi
_{n,t}:n\in\mathbb{Z}\right\}  $ and $\Psi_{n,t}^{\ast}(x)$ be the normalized
eigenfunction of $(L_{t}(q))^{\ast}$ corresponding to $\overline{\lambda
_{n}(t)}.$ Then
\begin{equation}
\chi_{n,t}(x))=\frac{1}{\overline{\alpha_{n}(t)}}\Psi_{n,t}^{\ast}(x),\text{
}\alpha_{n}(t)=(\Psi_{n,t}(x),\Psi_{n,t}^{\ast}(x))_{(0,1)},
\end{equation}
where $(.,.)_{(a,b)}$ denotes the inner product in $L_{2}(a,b).$ One can
easily verify that
\begin{equation}
\Psi_{n,t}(x)=\frac{\Phi_{+}(x,\lambda_{n}(t))}{\mid\Phi_{+}(x,\lambda
_{n}(t))\mid},\text{ }\chi_{n,t}(x))=\frac{1}{\overline{\alpha_{n}(t)}}%
\frac{\overline{\Phi_{-}(x,\lambda_{n}(t))}}{\mid\overline{\Phi_{-}%
(x,\lambda_{n}(t))}\mid},
\end{equation}%
\begin{equation}
\Psi_{n,t}(x+1)=e^{it}\Psi_{n,t}(x),\text{ }\chi_{n,t}(x+1)=e^{it}\chi
_{n,t}(x).
\end{equation}
Now we are ready to prove the main results of this chapter:

\begin{theorem}
If all conditions of Theorem 5 hold, then

$(a)$\ The spectrum of the operator $L(q)$ in a neighborhood of $\infty$
consist of the separated simple analytic arcs $\Gamma_{n}$ for $|n|>N$ with
endpoints $\lambda_{n}(0)$ and $\lambda_{n}(\pi),$ where $N$ is defined in
Remark 2.

$(b)$The operator $L(q)$ has at most finitely many spectral singularities.

$(c)$ The projections $P(\gamma)$ of $L(q)$ for all $\gamma\subset\Gamma_{n}$
and $|n|>N$ are uniformly bounded and the operator $L(q)$ has no spectral
singularity at infinity.
\end{theorem}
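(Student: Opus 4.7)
The plan is to handle the three assertions in sequence, using Theorems 2--5, the characterization of spectral singularities cited from [26], and Lemma 5.12 of [16] reproduced above.

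For part $(a)$, the simple analytic character of each individual arc $\Gamma_n$ for $|n|>N$ is already contained in (104)--(105) of Remark 2, so the task is pairwise disjointness. The asymptotic formulas (9), (86), (100) place $\Gamma_n$ inside a disk of radius $O(n)$ about $(2\pi n+t)^2$; comparing centres one sees immediately that $\Gamma_n$ and $\Gamma_m$ lie in disjoint disks whenever $|n|\neq|m|$ are both $>N$. The delicate case is the pair $\Gamma_n,\Gamma_{-n}$ (in Remark 2's notation, the two branches $\lambda_{n,1}$ and $\lambda_{n,2}$): disjointness here follows from the opposite signs of $\sqrt{D(\lambda_{n,j}(t),t)}$ in (73), together with the lower bound $|\lambda_{n,1}(t)-\lambda_{n,2}(t)|\geq c_3(n^{-s-1}+nt)>0$ for $t\in[0,\rho]$ established in (78) (and symmetrically for $t\in[\pi-\rho,\pi]$ via Theorem~4); for $t\in[\rho,\pi-\rho]$ disjointness is a direct consequence of~(9).

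For part $(b)$, invoke the cited result of [26]: $\lambda_n(t)\in S(L(q))$ is a spectral singularity if and only if $L_t(q)$ has an associated function at $\lambda_n(t)$, which in particular forces $\lambda_n(t)$ to be a multiple root of $F(\cdot)=2\cos t$. By Theorem~5 every eigenvalue $\lambda_n(t)$ with $|n|>N$ and $t\in[0,\pi]$ is simple, so no spectral singularity lies on any $\Gamma_n$ with $|n|>N$. The remaining spectrum $\bigcup_{|n|\leq N}\Gamma_n$ is a bounded set; every multiple eigenvalue in it satisfies $F'(\lambda)=0$, and since $F'$ is an entire function it has only finitely many zeros in any bounded region. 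Hence the set of spectral singularities of $L(q)$ is finite.

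Part $(c)$ is the technical core. Starting from (106) and changing variables via $\lambda=\lambda_n(t)$, $d\lambda=\lambda_n'(t)\,dt$, the projection $P(\gamma)$ for a regular arc $\gamma\subset\Gamma_n$ takes the form of an integral operator whose kernel is built from $\Psi_{n,t}(x)\,\overline{\chi_{n,t}(y)}$ via (110)--(111). Applying Lemma~5.12 of [16] with the operator-valued symbol $A'(t)$ taken to be the rank-one operator $f\mapsto \Psi_{n,t}\,(f,\chi_{n,t})_{(0,1)}$ reduces the estimate $\|P(\gamma)\|\leq C$ to the uniform lower bound $|\alpha_n(t)|\geq c>0$ for $|n|>N$ and $t\in[0,\pi]$, where $\alpha_n(t)=(\Psi_{n,t},\Psi_{n,t}^{\ast})_{(0,1)}$ is defined in (109). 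Using the asymptotic formulas of Theorems~3(b) and~4 for $\Psi_{n,t}$, together with the analogous expansions for $\Psi_{n,t}^{\ast}$ obtained by running the same iteration on the adjoint operator, the leading contribution to $\alpha_n(t)$ is $|u_n(t)|^2+|v_n(t)|^2=1+O(n^{-2})$ by~(28), while the oscillatory cross terms $\int_0^1 e^{\pm 4\pi inx}\,dx$ contribute only $O(n^{-1})$. This yields $|\alpha_n(t)|=1+o(1)$ uniformly, giving both the uniform boundedness of $\|P(\gamma)\|$ and, by Definition~2, the absence of a spectral singularity at infinity. The principal difficulty is maintaining the lower bound on $|\alpha_n(t)|$ in the boundary strips $t\in[0,\rho]\cup[\pi-\rho,\pi]$, where the two eigenvalues nearly coincide and the leading parts of $\Psi_{n,1,t}$ and $\Psi_{n,2,t}$ can become almost parallel; my plan here is to split into the two sub-regimes $nt\leq|q_{2n}|$ and $nt>|q_{2n}|$ of Lemma~5 and exploit the transversality relations $v_{n,1}(t)\sim 1$, $u_{n,2}(t)\sim 1$ established there, combined with the explicit formulas for $\alpha_{n,j}(t)$ in Theorem~3(b), to keep $\alpha_n(t)$ away from zero.
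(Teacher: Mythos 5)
Your overall architecture coincides with the paper's: (a) by separating the arcs, (b) by locating the spectral singularities inside the finite zero set of $\frac{dF}{d\lambda}$ on the compact part of the spectrum, and (c) by reducing, via Lemma 5.12 of [16] applied to the rank-one symbol $A^{\prime}(t)$, to a uniform lower bound on $|\alpha_n(t)|=|(\Psi_{n,t},\Psi_{n,t}^{\ast})|$. Parts (a) and (b) are essentially sound, with one caveat in (a): your separation argument for the ``delicate pairs'' only excludes collisions $\lambda_n(t)=\lambda_m(t)$ at the \emph{same} $t$ (via (78)), and your claim that $|n|\neq|m|$ already forces disjoint disks fails near $t=\pi$, where $\Gamma_n$ and $\Gamma_{-n-1}$ both approach the antiperiodic double point $(2n+1)^2\pi^2$. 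You still need to rule out $\lambda_n(t)=\lambda_m(t^{\prime})$ with $t\neq t^{\prime}$ for nearby arcs; the paper does this in one line from (104): a common point $\lambda$ would give $F(\lambda)=2\cos t=2\cos t^{\prime}$, hence $t=t^{\prime}$ on $[0,\pi]$, and equality at the same $t$ contradicts simplicity.

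The genuine gap is in (c). You assert that the leading contribution to $\alpha_n(t)$ is $|u_n(t)|^2+|v_n(t)|^2=1+O(n^{-2})$. That is not what $\alpha_n(t)$ is: since $L_t(q)$ is non-self-adjoint, $(\Psi_{n,t},\Psi_{n,t}^{\ast})=u_n\overline{u_n^{\ast}}+v_n\overline{v_n^{\ast}}+O(n^{-1})$, and the coefficients $u_n^{\ast},v_n^{\ast}$ of the adjoint eigenfunction are not the conjugates of $u_n,v_n$. The two terms $u_n\overline{u_n^{\ast}}$ and $v_n\overline{v_n^{\ast}}$ can in principle cancel --- such cancellation is exactly what produces a spectral singularity, so if $\alpha_n(t)$ were automatically $1+o(1)$ there would be nothing to prove. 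The relations $v_{n,1}\sim1$, $u_{n,2}\sim1$ from Lemma 5, which you propose to ``exploit,'' do not by themselves prevent this cancellation. The paper's Lemma 6 is precisely the missing step: it computes, from (119) and its adjoint analogue, the ratio $\frac{u_{n,1}\overline{u_{n,1}^{\ast}}}{v_{n,1}\overline{v_{n,1}^{\ast}}}=\frac{-C(t)+4\pi nt-\sqrt{D(t)}}{C^{\ast}(t)-4\pi nt-\sqrt{D^{\ast}(t)}}\,(1+o(1))$, and then bounds $|\alpha_n(t)|^{-1}$ by a constant using the upper bound (122) and the lower bound (123), both of order $f(n,t)=|4\pi nt|+|\sqrt{q_{2n}q_{-2n}}|$; this is where the hypotheses (62)/(63), the estimate (69) and the branch choice $\operatorname{Re}\sqrt{D}>0$ actually enter. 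Without carrying out this estimate, your proof of the uniform boundedness of $P(\gamma)$, and hence of the absence of a spectral singularity at infinity, is incomplete.
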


\begin{proof}
$(a)$ Due to Remark 2 we need only to note that $\Gamma_{n}$ for $|n|>N$ are
separated, that is, $\Gamma_{n}\cap\Gamma_{k}=\varnothing$ for $k\in
\mathbb{Z}\backslash\{n\}.$ This is true due to the following reason. The
equality $\lambda_{n}(t)=\lambda_{k}(t)$ contradicts the simplicity of
$\lambda_{n}(t).$ The equality $\lambda_{n}(t)=\lambda_{k}(t^{^{\prime}})$ for
$t^{\prime}\neq t$ and $t^{\prime}\in\lbrack0,\pi]$ contradicts the first
equality in (104).

$(b)$ By Theorem 5 the equation $\frac{dF(\lambda)}{d\lambda}=0$ has no zeros
at $\Gamma_{n}$ for $|n|>N.$ Since $\frac{dF(\lambda)}{d\lambda}$ is an entire
function, it has at most finite number of roots on the compact set
$\cup_{|n|\leq N}\Gamma_{n}.$ Now the proof of $(b)$ \ follows from the
well-known fact that the spectral singularities of $L(q)$ are contained in the
set $\{\lambda:\frac{dF(\lambda)}{d\lambda}=0,$ $\lambda\in S(L(q))\}$ (see
[9, 26]).

$(c)$ Changing the variable $\lambda$ to the variable $t$ in the integral in
(106), using%
\[
d\lambda=-p(\lambda)\left(  \frac{dF}{d\lambda}\right)  ^{-1}dt,\text{ }%
\frac{dF(\lambda_{n}(t))}{d\lambda}=-\varphi(1,\lambda_{n}(t))(\Phi
_{+}(x,\lambda_{n}(t)),\overline{\Phi_{-}(x,\lambda_{n}(t))})
\]
and (110) by simple calculations we get%
\begin{equation}
P(\gamma)f(x)=\frac{1}{2\pi}%
{\textstyle\int\limits_{\delta}}
(f,\chi_{n,t})_{\mathbb{R}}\Psi_{n,t}(x))dt,
\end{equation}
where $\delta=\{t\in(-\pi,\pi]:\lambda_{n}(t)\in\gamma\}.$ Let $A^{^{\prime}%
}(t)$ be the operator defined by
\begin{equation}
A^{^{\prime}}(t)f=(f,\chi_{n,t})_{(0,1)}\Psi_{n,t}(x)
\end{equation}
for $t\in\delta$ and $A^{^{\prime}}(t)=0$ for $t\in(-\pi,\pi]\backslash
\delta.$ By (109) we have
\begin{equation}
\parallel A^{^{\prime}}(t)\parallel=\mid\alpha_{n}(t)\mid^{-1},\text{ }\forall
t\in\delta,
\end{equation}
where $\alpha_{n}$ is a continuous function and $\alpha_{n}(t)\neq0$ $,$ since
$\lambda_{n}(t)$ is a simple eigenvalue. Therefore $A^{^{\prime}}\in
L_{\infty}((0,2\pi);B(L_{2}(0,1))).$

Let $f$ $\in C_{0},$ where $C_{0}$ is the set of all compactly supported
continuous functions, and $A$ be the operator defined by (108). Then using
Lemma 5.12 of [16] and (111)-(113) we get
\[
A=\lim_{N_{i}\rightarrow\infty}\frac{1}{2\pi}%
{\textstyle\sum\limits_{j=-N_{1}}^{N_{2}}}
{\textstyle\sum\limits_{k=-N_{3}}^{N_{4}}}
T_{j}^{\ast}%
{\textstyle\int\limits_{0}^{2\pi}}
e^{it(j-k)}A^{^{\prime}}(t)T_{k}f(x)dt=
\]%
\[
\lim_{N_{i}\rightarrow\infty}\frac{1}{2\pi}%
{\textstyle\sum\limits_{j=-N_{1}}^{N_{2}}}
{\textstyle\sum\limits_{k=-N_{3}}^{N_{4}}}
T_{j}^{\ast}%
{\textstyle\int\limits_{0}^{2\pi}}
e^{itj}(f(x+k)e^{-itk},\chi_{n,t})_{(0,1)}\Psi_{n,t}(x)dt=
\]%
\[
\lim_{N_{i}\rightarrow\infty}\frac{1}{2\pi}%
{\textstyle\sum\limits_{j=-N_{1}}^{N_{2}}}
{\textstyle\int\limits_{0}^{2\pi}}
(f,\chi_{n,t})_{\mathbb{R}}e^{itj}T_{j}^{\ast}\Psi_{n,t}(x)dt=P(\gamma)f(x).
\]
Hence $Af=P(\gamma)f$ for all $f$ $\in C_{0}$, where $C_{0}$ is dense in
$L_{2}(-\infty,\infty).$ Moreover, $A$ is bounded by Lemma 5.12 of [16] and
$P(\gamma)$ is bounded, since $\gamma\subset\Gamma_{n}$ and $\Gamma_{n}$ for
$|n|>N$ does not contain the spectral singularities. Therefore, we have
$A=P(\gamma).$ Now Lemma 5.12 of [16] with (114) implies that
\[
\parallel P(\gamma)\parallel\leq\sup_{t\in\delta}\mid\alpha_{n}(t)\mid^{-1}.
\]
On the other hand, by $(a),$ if $\gamma_{k}$ is a regular spectral arcs such
that $d(0,\gamma_{k})$ is a sufficiently large number, then there exists $n$
such that $|n|>N$ and $\gamma_{k}\subset\Gamma_{n}.$ Therefore, $(c)$ follows
from the following lemma.
\end{proof}

\begin{lemma}
If all conditions of Theorem 5 hold, then there exists a constant $d$ such
that%
\begin{equation}
\mid\alpha_{n}(t)\mid^{-1}<d
\end{equation}
for all $\mid n\mid>N$ and $t\in(-\pi,\pi].$
\end{lemma}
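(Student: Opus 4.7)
By the symmetry $\lambda_{n}(-t)=\lambda_{n}(t)$ from Remark 2 and continuity of $\alpha_{n}$, it suffices to bound $|\alpha_{n}(t)|^{-1}$ for $t\in[0,\pi]$ and $|n|>N$. A short boundary-term computation shows that $(L_{t}(q))^{*}=L_{t}(\bar q)$ (same boundary data, conjugated potential), so $\Psi_{n,t}^{*}$ is the normalized eigenfunction of $L_{t}(\bar q)$ for $\overline{\lambda_{n}(t)}$. Since $(\bar q)_{n}=\overline{q_{-n}}$, the hypotheses of Theorem 5 transfer from $q$ to $\bar q$, so every asymptotic formula of Section 2 applies simultaneously to $\Psi_{n,t}$ and $\Psi_{n,t}^{*}$. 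The plan is to split $[0,\pi]$ into $[0,\rho]$, $[\rho,\pi-\rho]$, and $[\pi-\rho,\pi]$ and treat the three pieces separately.

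On the middle interval $[\rho,\pi-\rho]$, Result 1 gives $\Psi_{n,t}(x)=e^{i(2\pi n+t)x}+O(n^{-1})$ uniformly, with the same leading exponent for $\Psi_{n,t}^{*}$ (applied to $L_{t}(\bar q)$). Hence
\[
\alpha_{n}(t)=\int_{0}^{1}\Psi_{n,t}(x)\,\overline{\Psi_{n,t}^{*}(x)}\,dx=1+O(n^{-1}),
\]
which is bounded below for $|n|$ sufficiently large.

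On $[0,\rho]$, Theorem 3 applied to both $q$ and $\bar q$ expresses $\Psi_{n,j,t}$ and $\Psi_{n,j,t}^{*}$ in the basis $\{e^{i(2\pi k+t)x}\}_{k\in\mathbb{Z}}$ with bounded coefficients $\alpha_{n,j}(t),\alpha_{n,j}^{*}(t)=O(1)$. The first key step is to use the biorthogonality $(\Psi_{n,1,t},\Psi_{n,2,t}^{*})_{(0,1)}=0$ (a consequence of $\lambda_{n,1}\ne\lambda_{n,2}$) to link the adjoint coefficients to the original ones: expanding in the Fourier basis yields $\alpha_{n,1}(t)+\overline{\alpha_{n,2}^{*}(t)}=O(n^{-1})$, i.e.\ $\alpha_{n,2}^{*}=-\overline{\alpha_{n,1}}+O(n^{-1})$. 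Substituting into $(\Psi_{n,2,t},\Psi_{n,2,t}^{*})_{(0,1)}$ gives, up to finite normalization factors,
\[
\alpha_{n}(t)=\frac{1-\alpha_{n,1}(t)\alpha_{n,2}(t)+O(n^{-1})}{\|\varphi_{n,2,t}\|\,\|\varphi_{n,2,t}^{*}\|}.
\]
Setting $X_{j}:=\lambda_{n,j}(t)-(2\pi n+t)^{2}-A(\lambda_{n,j},t)=-C-4\pi nt+(-1)^{j}\sqrt{D}$ and combining (37) with (38) yields $\alpha_{n,1}\alpha_{n,2}=X_{2}/X_{1}=(\xi+\sqrt{D})/(\xi-\sqrt{D})$, where $\xi:=-C-4\pi nt$, hence
\[
|1-\alpha_{n,1}(t)\alpha_{n,2}(t)|=\frac{2|\sqrt{D}|}{|\xi-\sqrt{D}|}\ge\frac{2|\sqrt{D}|}{|\xi|+|\sqrt{D}|}.
\]
The lower bound (65) together with $|\xi|\sim 4\pi nt$ on $[0,\rho]$ gives $|\sqrt{D}|\ge c|\xi|$ uniformly, and since $\|\varphi_{n,2,t}\|,\|\varphi_{n,2,t}^{*}\|=O(1)$, this delivers the required uniform lower bound on $|\alpha_{n}(t)|$.

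The interval $[\pi-\rho,\pi]$ is handled identically using Theorem 4 in place of Theorem 3 and conditions (98)--(99) in place of (62)--(63). The principal obstacle lies in this third step: uniformity across the transition between the regimes $nt\ll|q_{2n}|$ and $nt\gg|q_{2n}|$. The explicit formula shows that $\alpha_{n,1}\alpha_{n,2}$ sweeps from approximately $-1$ to $0$ as $nt/|q_{2n}|$ varies, and the quantitative assumption (62) or (63), manifested through (65), is precisely what prevents $|\sqrt{D}|/|\xi|$ from degenerating in the intermediate regime.
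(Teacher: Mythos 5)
Your proof is correct, and on the core interval $[0,\rho]$ it takes a genuinely different route from the paper's. The paper evaluates $(\Psi_{n,1,t},\Psi_{n,1,t}^{*})$ head-on: it writes the inner product as $v_{n,1}(t)\overline{v_{n,1}^{*}(t)}\bigl(1+\frac{u_{n,1}(t)\overline{u_{n,1}^{*}(t)}}{v_{n,1}(t)\overline{v_{n,1}^{*}(t)}}\bigr)+O(n^{-1})$, computes the product of ratios from (84)--(85) together with the substitution $q_{n}\mapsto q_{-n}$ for the adjoint, and is then forced to bound $\mid C^{*}(t)-\sqrt{D^{*}(t)}-C(t)-\sqrt{D(t)}+o(f(n,t))\mid$ from below, which requires the adjoint quantities $C^{*},D^{*}$ and the fact that both square roots have positive real part (i.e.\ (65), (66), (69) applied to $L_{t}$ and $L_{t}^{*}$ simultaneously). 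You instead eliminate the adjoint coefficients at the outset via the biorthogonality $(\Psi_{n,1,t},\Psi_{n,2,t}^{*})=0$, reducing everything to $1-\alpha_{n,1}\alpha_{n,2}=-2\sqrt{D}/(\xi-\sqrt{D})$, a quantity built from the original operator alone, so that only the single lower bound (65) is needed. This is arguably cleaner; the price is that you must still invoke the analogue of Theorem 3 for $\bar q$ to know that $\Psi_{n,2,t}^{*}$ has the correct leading exponential with $\alpha_{n,2}^{*}=O(1)$, and that the labelling $j=1,2$ for $L_{t}^{*}$ matches that of $\overline{\lambda_{n,j}(t)}$ (the branch $\operatorname{Re}\sqrt{D}>0$ is preserved under $q_{n}\mapsto\overline{q_{-n}}$). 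Two small points you gloss over, both harmless: the functions $B$, $C$, $D$ are evaluated at two different eigenvalues in $X_{1}$ and $X_{2}$, so $\alpha_{n,1}\alpha_{n,2}=X_{2}/X_{1}$ holds only up to a factor $1+o(1)$ (controlled by (58), (64) and (83)), which weakens the final lower bound only by a constant; and the algebra requires $X_{1}\neq0$, which follows from $v_{n,1}(t)\sim1$ and (54).
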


\begin{proof}
For $t\in\lbrack\rho,\pi-\rho]$ inequality (115) follows from (9). Now we
prove this for $t\in\lbrack0,\rho].$ The other cases are similar. Since the
boundary condition (2) is self-adjoint we have $(L_{t}(q))^{\ast}=$
$L_{t}(\overline{q}).$ Moreover, the Fourier coefficients of $\overline{q}$
has the form
\[
(\overline{q},e^{i2\pi nx})=\overline{q_{-n}}.
\]
Therefore, one can readily verify that if $q$ satisfies the conditions of
Theorem 5 then $\overline{q}$ also satisfies these conditions. Thus all
formulas and theorems obtained for $L_{t}$ are true for $L_{t}^{\ast}$ if we
replace $q_{n}$ with $\overline{q_{-n}}.$ Hence, by formula (26), we have the
following, uniform with respect to $t\in\lbrack0,\rho],$ formulas
\begin{align}
\Psi_{n,j,t}^{\ast}(x)  &  =u_{n,j}^{\ast}(t)e^{i(2\pi n+t)x}+v_{n,j}^{\ast
}(t)e^{i(-2\pi n+t)x}+h_{n,j,t}^{\ast}(x),\nonumber\\
(u_{n,j}^{\ast}(t))^{2}+(v_{n,j}^{\ast}(t))^{2}  &  =1+O(n^{-1}),(h_{n,j,t}%
^{\ast},e^{i(\pm2\pi n+t)x})=0,\parallel h_{n,j,t}^{\ast}\parallel=O(n^{-1}).
\end{align}
Then%
\begin{equation}
(\Psi_{n,j,t}(x),\Psi_{n,j,t}^{\ast}(x))=u_{n,j}(t)\overline{u_{n,j}^{\ast
}(t)}+v_{n,j}(t)\overline{v_{n,j}^{\ast}(t)}+O(n^{-1}).
\end{equation}
By Lemma 5 we have $v_{n,1}^{\ast}\sim$ $u_{n,2}^{\ast}(t)\sim1$. Using this
and (80) in (117) for $j=1,$ we get
\begin{equation}
(\Psi_{n,1,t},\Psi_{n,1,t}^{\ast})=v_{n,1}(t)\overline{v_{n,1}^{\ast}%
(t)}(1+\frac{u_{n,1}(t)\overline{u_{n,1}^{\ast}(t)}}{v_{n,1}(t)\overline
{v_{n,1}^{\ast}(t)}})+O(n^{-1}).
\end{equation}
It follows from (84) and (85) that
\begin{equation}
\frac{u_{n,1}}{v_{n,1}}=\frac{(q_{2n}+B(t))}{(C(t)-4\pi nt-\sqrt{D(t)})}%
=\frac{-C(t)+4\pi nt-\sqrt{D(t)}}{(q_{-2n}+B^{^{\prime}}(t))}.
\end{equation}
Then $\frac{u_{n,1}^{\ast}(t)}{v_{n,1}^{\ast}(t)}$ satisfies the formula
obtained from (119) by replacing $q_{n}$ with $\overline{q_{-n}}.$ Hence,
$\frac{\overline{u_{n,1}^{\ast}(t)}}{\overline{v_{n,1}^{\ast}(t)}}$ satisfies
the formula obtained from (119) by replacing $q_{n}$ with $q_{-n}.$ Thus, we
have%
\[
\frac{u_{n,1}}{v_{n,1}}\frac{\overline{u_{n,1}^{\ast}(t)}}{\overline
{v_{n,1}^{\ast}(t)}}=\frac{-C(t)+4\pi nt-\sqrt{D}}{(q_{-2n}+B^{^{\prime}}%
(t))}\frac{(q_{-2n}+B^{\ast}(t))}{(C^{\ast}(t)-4\pi nt-\sqrt{D^{\ast}(t)})},
\]
where $B^{\ast},C^{\ast}$ and $D^{\ast}$ are obtained from $B,C$ and $D$ by
replacing $q_{n}$ with $q_{-n}.$ Since
\[
(q_{-2n}+B^{^{\prime}}(t))=q_{-2n}(1+o(1)),\text{ }(q_{-2n}+B^{\ast
}(t))=q_{-2n}(1+o(1))
\]
(see (46), (53)), the last equality can be written in the form%
\begin{equation}
\frac{u_{n,1}}{v_{n,1}}\frac{\overline{u_{n,1}^{\ast}(t)}}{\overline
{v_{n,1}^{\ast}(t)}}=\frac{-C(t)+4\pi nt-\sqrt{D(t)}}{C^{\ast}(t)-4\pi
nt-\sqrt{D^{\ast}(t)}}(1+o(1)).
\end{equation}
Using (57) and (66) for $L_{t}$ and $L_{t}^{\ast}$ one can easily see that%
\[
\mid C(t)\mid+\mid\sqrt{D(t)}\mid+\mid C^{\ast}(t)\mid+\mid\sqrt{D^{\ast}%
(t)}\mid=O(f(n,t)),
\]
where $f(n,t)=\mid4\pi nt\mid+\mid\sqrt{q_{2n}q_{-2n}}\mid.$ Therefore, by
(118), (120) and by $v_{n,1}\sim v_{n,1}^{\ast}\sim1,$ there exists a constant
$c_{5}$ such that
\begin{equation}
\frac{1}{\mid(\Psi_{n,1,t},\Psi_{n,1,t}^{\ast})\mid}<c_{5}\mid\frac{C^{\ast
}(t)-4\pi nt-\sqrt{D^{\ast}(t)}}{C^{\ast}(t)-\sqrt{D^{\ast}(t)}-C(t)-\sqrt
{D(t)}+o(f(n,t))}\mid.
\end{equation}
From (57) and (66) for $L_{t}^{\ast}$ we get
\begin{equation}
\mid C^{\ast}(t)-4\pi nt-\sqrt{D^{\ast}(t)}\mid<\mid9\pi nt\mid+2\mid
\sqrt{q_{2n}q_{-2n}}\mid<3f(n,t).
\end{equation}
Similarly, by (57), (66) and (69) there exists a constant $c_{6}$ such that
\begin{equation}
\mid C^{\ast}(t)-\sqrt{D^{\ast}(t)}-C(t)-\sqrt{D(t)}+o(f(n,t))\mid
>c_{6}f(n,t),
\end{equation}
Thus, using (122) and (123) in (121) we get the proof of the lemma
\end{proof}

It easily follows from this lemma the following result about the asymptotic
spectrality of the operator $L(q)$ defined as follows. Let $e(t,\gamma)$ be
the spectral projection defined by contour integration of the resolvent of
$L_{t}(q)$, where $\gamma\in R$ and $R$ is the ring consisting of all sets
which are the finite union of the half closed rectangles. In [17] it was
proved\ Theorem 3.5 (for the differential operators of arbitrary order with
periodic coefficients rather than for $L(q)$) which, for the operator $L(q),$
can be written in the form:

$L(q)$\textit{ is a spectral operator if and only if }%
\[
\sup_{\gamma\in R}(\sup_{t\in(-\pi,\pi]}\parallel e(t,\gamma)\parallel
)<\infty.
\]
According to this theorem we give the following definition of the asymptotic spectrality.

\begin{definition}
The operator $L(q)$ is said to be an asymptotically spectral operator if there
exists a positive constant $C$ such that
\begin{equation}
\sup_{\gamma\in R(C)}(\sup_{t\in(-\pi,\pi]}\parallel e(t,\gamma)\parallel
)<\infty,
\end{equation}
where $R(C)$ is the ring consisting of all sets which are the finite union of
the half closed rectangles lying in $\{\lambda\in\mathbb{C}:\mid\lambda
\mid>C\}.$
\end{definition}

\begin{theorem}
If all conditions of Theorem 5 hold, then the operator $L(q)$ is an
asymptotically spectral operator in sense of Definition 3.
\end{theorem}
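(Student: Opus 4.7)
The plan is to deduce Theorem~7 from Lemma~6 by observing that, for $C$ sufficiently large, every spectral projection $e(t,\gamma)$ with $\gamma\in R(C)$ is a finite sum of rank-one projections onto \emph{simple} eigenfunctions of $L_t(q)$, and to bound this sum by a constant independent of $t$ using Lemma~6 together with the Bessel-basis property inherited from the asymptotic closeness of the eigenfunctions to the trigonometric system.

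First, I would pick $C$ large enough that for every $t\in(-\pi,\pi]$ and every $\gamma\in R(C)$ only eigenvalues $\lambda_n(t)$ with $|n|>N$ can lie in $\gamma$; this is possible because by Remark~2 the remaining $2N+1$ eigenvalues stay in a compact set as $t$ varies. By Theorem~5 such $\lambda_n(t)$ are simple, so
\[
e(t,\gamma)f=\sum_{n\in I_{t,\gamma}}(f,\chi_{n,t})_{(0,1)}\,\Psi_{n,t},\qquad I_{t,\gamma}=\{n:|n|>N,\ \lambda_n(t)\in\gamma\},
\]
where $\chi_{n,t}=(\overline{\alpha_n(t)})^{-1}\Psi_{n,t}^{*}$ is the biorthogonal system from (109), and the sum is finite for each fixed $t$ because $L_t(q)$ has discrete spectrum.

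The key step is to verify two Bessel-type inequalities, with constants independent of $t\in(-\pi,\pi]$,
\[
\sum_{|n|>N}|(f,\chi_{n,t})_{(0,1)}|^{2}\le K_1\|f\|^{2},\qquad \Bigl\|\sum_{|n|>N}c_n\Psi_{n,t}\Bigr\|^{2}\le K_2\sum_{|n|>N}|c_n|^{2}.
\]
The asymptotic formula (26) together with (27) and (28) shows that the $L_2[0,1]$-difference between $\Psi_{n,t}$ and $u_n(t)e^{i(2\pi n+t)x}+v_n(t)e^{i(-2\pi n+t)x}$ is $O(1/n)$ in an $\ell^{2}$-summable sense, while the exponentials form an orthonormal basis; this yields the second inequality. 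The first follows from the analogous expansion (116) for $\Psi_{n,t}^{*}$ combined with the uniform bound $\|\chi_{n,t}\|=|\alpha_n(t)|^{-1}<d$ supplied by Lemma~6. Cauchy--Schwarz applied to the sum defining $e(t,\gamma)$ and these two Bessel bounds then give $\|e(t,\gamma)\|\le\sqrt{K_1K_2}$, so $\sup_{\gamma\in R(C)}\sup_{t}\|e(t,\gamma)\|<\infty$, which is Definition~3.

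The main obstacle will be establishing the Bessel inequalities uniformly through the transition zones $t\in[0,\rho]\cup[\pi-\rho,\pi]$, where the partner eigenfunctions $\Psi_{n,1,t},\Psi_{n,2,t}$ share the same leading block $\operatorname{span}\{e^{i(2\pi n+t)x},e^{i(-2\pi n+t)x}\}$ and the $2\times 2$ coefficient matrix with entries $u_{n,j}(t),v_{n,j}(t)$ could in principle become singular. Lemma~5 (the relations (80)) together with $|u_{n,j}|^{2}+|v_{n,j}|^{2}=1+O(n^{-2})$ is exactly what is needed: it shows that the determinant of this matrix stays bounded away from zero uniformly in $t\in[0,\rho]$, so the matrix has a uniformly bounded inverse, and similarly on $[\pi-\rho,\pi]$. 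Once this uniform change-of-basis bound is in hand, the two Bessel inequalities assemble from the orthonormal-basis property of the trigonometric system by a standard $\ell^{2}$-perturbation argument, completing the verification that $L(q)$ is an asymptotically spectral operator.
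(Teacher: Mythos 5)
Your proposal is correct and follows essentially the same route as the paper: reduce $e(t,\gamma)$ to a finite sum of rank-one terms $\frac{1}{\alpha_{n}(t)}(f,\Psi_{n,t}^{\ast})\Psi_{n,t}$ over simple eigenvalues with $|n|>N$, and bound it uniformly in $t$ via Lemma 6 together with Bessel-type inequalities extracted from (26)--(28) and (116); the paper merely packages your synthesis bound as a splitting of $e(t,\gamma)$ into $e^{\pm}$ and $e^{h}$ parts and applies Cauchy--Schwarz to the remainder. One caveat: your closing paragraph about the $2\times2$ coefficient matrix is a red herring --- the upper Bessel bounds you need require only that $u_{n,j}(t),v_{n,j}(t)$ be bounded (which (28) already gives) and that $\|h_{n,j,t}\|=O(n^{-1})$ be square-summable, not that the matrix be uniformly invertible; moreover, a uniform lower bound on its determinant does not actually follow from Lemma 5 and (28) alone (such nondegeneracy is essentially the content of Lemma 6's bound on $\alpha_{n}(t)$, which you already invoke where it is genuinely needed).
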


\begin{proof}
Let $C$ be a positive constant such that if $\lambda_{n}(t)\in\{\lambda
\in\mathbb{C}:\mid\lambda\mid>C\},$ then $\mid n\mid>N$ for all $t\in(-\pi
,\pi],$ where $N$ is defined in Remark 2. If $\gamma\in R(C),$ then $\gamma$
contains in a finite number \ of the simple eigenvalues of \ $L_{t}(q).$ Thus,
there exists a finite subset $J(t,\gamma)$ of $\{n\in\mathbb{Z}$: $\mid
n\mid>N\}$ such that the eigenvalue $\lambda_{k}(t)$ lies in $\gamma$ if and
only if $k\in J(t,\gamma).$ It is well-known that these eigenvalues are the
simple poles of the Green function of $L_{t}(q)$ and the projection
$e(t,\gamma)$ has the form
\[
e(t,\gamma)f=\sum_{n\in J(t,\gamma)}\frac{1}{\alpha_{n}(t)}(f,\Psi_{n,t}%
^{\ast})\Psi_{n,t}.
\]
Then by (26) $e(t,\gamma)f$ for $t\in\lbrack0,\rho]$ is the sum of
$e^{+}(t,\gamma)f,$ $e^{-}(t,\gamma)f$ and $e^{h}(t,\gamma)f$, where%
\begin{align*}
e^{+}(t,\gamma)f  &  =\sum_{n\in J(t,\gamma)}\frac{1}{\alpha_{n}(t)}%
(f,\Psi_{n,t}^{\ast})u_{n}(t)e^{i(2\pi n+t)x},\\
e^{-}(t,\gamma)f  &  =\sum_{n\in J(t,\gamma)}\frac{1}{\alpha_{n}(t)}%
(f,\Psi_{n,t}^{\ast})v_{n}(t)e^{i(-2\pi n+t)x},\\
e^{h}(t,\gamma)f  &  =\sum_{n\in J(t,\gamma)}\frac{1}{\alpha_{n}(t)}%
(f,\Psi_{n,t}^{\ast})h_{n,t},\text{ }\parallel h_{n,t}\parallel=O(\frac
{\ln\mid n\mid}{\mid n\mid})
\end{align*}
It follows from (27), (28) and Lemma 6 that there exists a constant $c_{7}$
such that
\begin{equation}
\parallel e^{\pm}(t,\gamma)f\parallel^{2}<2d^{2}\sum_{n:\mid n\mid>N}%
\mid(f,\Psi_{n,t}^{\ast})\mid^{2},
\end{equation}%
\begin{equation}
\parallel e^{h}(t,\gamma)f\parallel<c_{7}d\sum_{n:\mid n\mid>N}\mid
(f,\Psi_{n,t}^{\ast})\mid\frac{\ln\mid n\mid}{\mid n\mid}%
\end{equation}
for all $t\in\lbrack0,\rho]$ and $\gamma\in R(C).$

Now suppose that $\parallel f\parallel=1.$ By (116) there exists a constant
$c_{8}$ such that
\begin{equation}
\sum_{n:\mid n\mid>N}\mid(f,\Psi_{n,t}^{\ast})\mid^{2}\leq c_{8}.
\end{equation}
This inequality with (125) gives
\[
\parallel e^{\pm}(t,\gamma)f\parallel^{2}<2d^{2}c_{8}.
\]
Thus, in (126), first using the Schwarz inequality for $l_{2}$ and then taking
into account (127) we conclude that there exists a positive constant $c_{9}$
such that $\parallel e^{h}(t,\gamma)f\parallel<c_{9}$ and
\begin{equation}
\parallel e(t,\gamma)\parallel<c_{9}%
\end{equation}
for all $t\in\lbrack0,\rho]$ and $\gamma\in R(C).$ In the same way, one can
prove inequality (128) for all $t\in(-\pi,\pi]$ and $\gamma\in R(C),$ that is,
the theorem is proved.
\end{proof}


\begin{thebibliography}{99}                                                                                               %


\bibitem {}N. Dernek, O. A. Veliev, On the Riesz basisness of the root
functions of the nonself-adjoint Sturm-Liouville operators, Israel Journal of
Mathematics 145, 113-123 (2005)

\bibitem {}P. Djakov,~B.~S. Mitjagin, Instability zones of periodic
1-dimensional Schrodinger and Dirac operators, Russian Math. Surveys, 61(4),
663-776 (2006).

\bibitem {}P. Djakov,~B.~S. Mitjagin, Convergence of spectral decompositions
of Hill operators with trigonometric polynomial potentials, Doklady
Mathematics, Vol.83, No.1, 5-7 (2011).

\bibitem {}N. Dunford, J. T. Schwartz, Linear Operators, Part 3, Spectral
Operators, Wiley-Interscience, MR 90g:47001c, New York, 1988.

\bibitem {}M. S. P. Eastham, The Spectral Theory of Periodic Differential
Equations. Edinburg. Scottish Acedemic Press 1973.

\bibitem {}M. G. Gasymov, Spectral analysis of a class of second-order
nonself-adjoint differential operators, Fankts. Anal. Prilozhen 14, 14-19 (1980).

\bibitem {}I. M. Gelfand, Expansion in series of eigenfunctions of an equation
with periodic coefficients, Sov. Math. Dokl. {73,} 1117-1120 (1950).

\bibitem {}F. Gesztesy and V. Tkachenko, When is a non-self-adjoint Hill
operator a spectral operator of scalar type, C. R. Acad. Sci. Paris, Ser. I,
343, 239-242 (2006).

\bibitem {}F. Gesztesy and V. Tkachenko, A criterion for Hill operators to be
spectral operators of scalar type, J. Analyse Math. 107, 287--353 (2009).

\bibitem {}F. Gesztesy and V. Tkachenko, A Schauder and Riesz Basis Criterion
for Non-Self-Adjoint Schr\"{o}dinger Operators with Periodic and Antiperiodic
Boundary Conditions, Journal of Differential Equations, 253, 400-437 (2012).

\bibitem {}N. B. Kerimov, Kh. R. Mamedov, On the Riesz basis property of the
root functions in certain regular boundary value problems, Math. Notes, Vol.
64, No.4, 483-487 (1998).

\bibitem {}G. M. Kesselman, On unconditional convergence of the eigenfunction
expansions of some differential operators, Izv. Vuzov, Matematika, No 2, 82-93
(1964) (In Russian).

\bibitem {}F. G. Maksudov, O. A. Veliev, Spectral analysis of differential
operators with periodic matrix coefficients. Differential Equations Vol. 25,
No. 3, 271-277 (1989).

\bibitem {}V. A. Marchenko, Sturm-Liouville Operators and Applications,
Birkhauser, Basel, 1986.

\bibitem {}A. S. Makin, Convergence of Expansion in the Root Functions of
Periodic Boundary Value Problems, Doklady Mathematics, Vol.73, No. 1, 71-76 (2006).

\bibitem {}D. McGarvey, Operators commuting with translations by one. Part I.
Representation theorems, J.Math. Anal. Appl. 4, 366--410 (1962).

\bibitem {}D. McGarvey, Operators commuting with translations by one. Part II.
Differential operators with periodic coefficients in $L_{p}(-\infty,\infty)$,
J. Math. Anal. Appl. 11, 564--596 (1965).

\bibitem {}D. McGarvey, Operators commuting with translations by one. Part
III. Perturbation results for periodic differential operators, J. Math. Anal.
Appl. 12, 187--234 (1965).

\bibitem {}V. P. Mikhailov, On Riesz bases in $L_{2}[0,1]$, Dokl. Akad. Nauk
USSR, \textbf{114}, No 5, 981-984 (1962).

\bibitem {}A. A. Shkalikov, On the Riesz basis property of the root vectors of
ordinary differential operators, Russian Math. Surveys, Vol. 34, 5, 249-250 (1979).

\bibitem {}A. A. Shkalikov, On the basis property of the eigenfunctions of
ordinary differential operators with integral boundary conditions, Vestnik
Moscow University, Ser. Mat. Mekh., Vol 37, 6, 12-21 (1982).

\bibitem {}A. A. Shkalikov, O. A. Veliev, On the Riesz basis property of the
eigen- and associated functions of periodic and antiperiodic Sturm-Liouville
problems, Math. Notes, 85(5), 647-660 (2009).

\bibitem {}E. C. Titchmarsh, Eigenfunction Expansion (Part II). Oxford Univ.
Press, 1958.

\bibitem {}V. A. Tkachenko, Spectral analysis of nonself-adjoint
Schr\"{o}dinger operator with a periodic complex potential, Sov. Math. Dokl.
{5}, 413-415 (1964).

\bibitem {}O. A. Veliev, The one dimensional Schr\"{o}dinger operator with a
periodic complex-valued potential. Sov. Math. Dokl., {250, 1292-1296} (1980).

\bibitem {}O. A. Veliev, The spectrum and spectral singularities of the
differential operators with periodic complex-valued coefficients. Differential
Equations, No.8, 1316-1324 (1983).

\bibitem {}O. A. Veliev, The spectral resolution of the nonself-adjoint
differential operators with periodic coefficients. Differential Equations
Vol.22, No.12, 2052-2059 (1986).

\bibitem {}O. A .Veliev, \ M. Toppamuk Duman, The spectral expansion for a
nonself-adjoint Hill operators with a locally integrable potential, Journal of
Math. Analysis and Appl. 265, 76-90 (2002).

\bibitem {}O. A. Veliev, Spectral Expansion for a nonself-adjoint periodic
differential operator, Russian Journal of Mathematical Physics, Vol. 13, No.
1, 101-110 (2006).

\bibitem {}O. A. Veliev, Uniform convergence of the spectral expansion for a
differential operator with periodic matrix coefficients, Bound. Value Probl.,
Article ID 628973, 22 p., doi:10.1155/2008/628973. 2008

\bibitem {}O. A. Veliev, On the Nonself-adjoint Ordinary Differential
Operators with Periodic Boundary Conditions. Israel Journal of Mathematics,
176, 195-208 (2010).

\bibitem {}O. A. Veliev, On the basis property of the root functions of
differential operators with matrix coefficients, Central European Journal of
Mathematics , 9(3), 657-672 (2011)
\end{thebibliography}
\end{document}